\documentclass [12pt,oneside]{amsart}
\usepackage{color}
\usepackage{amssymb,amsmath,amsthm,amsfonts}
\usepackage{setspace,bbm}
\usepackage{empheq}
\usepackage[all]{xy}
\usepackage[bookmarks,colorlinks,breaklinks]{hyperref}  
\hypersetup{linkcolor=blue,citecolor=orange,filecolor=blue,urlcolor=blue} 

\newcommand{\bC}{\mathbb{C}}

\newcommand{\bZ}{\mathbb{Z}}
\newcommand{\bQ}{\mathbb{Q}}
\newcommand{\bR}{\mathbb{R}}
\newcommand{\bN}{\mathbb{N}}

\newcommand{\gcdd}{{\rm gcd}}

\newcommand{\la}{\lambda}

\newtheorem{theorem}{Theorem}[section]
\newtheorem{proposition}[theorem]{Proposition}

\newtheorem{lemma}[theorem]{Lemma}
\newtheorem{claim}[theorem]{Claim}


\newtheorem{definition}[theorem]{Definition}
\newtheorem{remark}[theorem]{Remark}
\newtheorem{corollary}[theorem]{Corollary}

\newtheorem{example}[theorem]{Example}
\definecolor{myblue}{rgb}{0.6, 0.9, 1}


\definecolor{mygreen}{rgb}{0,0,1}
\definecolor{orange}{rgb}{0.8,0,0.2}
\oddsidemargin=0in
\evensidemargin=0in
\textwidth=6.50in             

\newcommand{\al}{\alpha}

\headheight=10pt
\headsep=10pt
\topmargin=.5in
\textheight=8in

\begin{document}
\author{Niki Myrto Mavraki}
\title{Impossible intersections in a Weierstrass family of elliptic curves}
\address{Niki Myrto Mavraki, Department of Mathematics, University of British Columbia, Vancouver, BC V6T 1Z2, Canada}
\email{\href{mailto:myrtomav@math.ubc.ca}{myrtomav@math.ubc.ca}}
\begin{abstract}
Consider the Weierstrass family of elliptic curves $E_{\la}:y^2=x^3+\la$ parametrized by nonzero $\la\in\overline{\bQ_2}$, and let $P_{\la}(x)=(x,\sqrt{x^3+\la})\in E_{\la}$. In this article, given $\al,\beta\in\overline{\bQ_2}$ such that $\frac{\al}{\beta}\in\bQ$, we provide an explicit description for the set of parameters $\la$ such that $P_{\la}(\al)$ and $P_{\la}(\beta)$ are simultaneously torsion for $E_{\la}$. In particular we prove that the aforementioned set is empty unless $\frac{\al}{\beta}\in\{-2,-\frac{1}{2}\}$. Furthermore, we show that this set is empty even when $\frac{\al}{\beta}\notin\bQ$ provided that $\al$ and $\beta$ have distinct $2-$adic absolute values and the ramification index $e(\bQ_2(\frac{\al}{\beta})~\vert~\bQ_2)$ is coprime with $6$. We also improve upon a recent result of Stoll concerning the Legendre family of elliptic curves $E_{\la}:y^2=x(x-1)(x-\la)$, which itself strengthened earlier work of Masser and Zannier by establishing that provided $a,b$ have distinct reduction modulo $2$, the set $\{\la\in\mathbb{C}\setminus\{0,1\}~:~(a,\sqrt{a(a-1)(a-\la)}),(b,\sqrt{b(b-1)(b-\la)})\in (E_{\la})_{tors}\}$ 
is empty.
\end{abstract}

\keywords{attracting fixed point, elliptic curve, family of Latt\`es maps, impossible intersections, preperiodic point, torsion}

\maketitle

\section{Introduction}

Let $E_{\la}:y^2=x(x-1)(x-\la)$ be the Legendre family of elliptic curves parametrized by $\la\in\mathbb{C}\setminus\{0,1\}$ and let $P_{\la}=(2,\sqrt{2(2-\la)}),Q_{\la}=(3,\sqrt{6(3-\la)})\in E_{\la}$. Masser and Zannier \cite{M-Z-0,M-Z-1} proved that the set of parameters $\la$ such that both $P_{\la}$ and $Q_{\la}$ are torsion for $E_{\la}$ is finite. Later, in \cite{M-Z-2} they strengthened the result and showed that $2$ and $3$ are not special. More specifically, they proved that for any $P_{\la},Q_{\la}\in E_{\la}$ with $x$ coordinates in $\overline{\mathbb{C}(\la)}$ there are only finitely many $\la$ such that both $P_{\la}$ and $Q_{\la}$ are torsion for $E_{\la}$ unless there exist $n,m\in\bZ$, not both $0$, such that $[m]P_{\la}=[n]Q_{\la}$ for all $\la\in\mathbb{C}$. Moreover, they proved similar finiteness results for any fibred product of two elliptic curves \cite{M-Z-3}. 

Recently, Stoll \cite{Stoll14} proved that in the case of the Legendre family of elliptic curves, given two sections with $x-$coordinates $\al\in\overline{\bQ}$ and $\beta\in\overline{\bQ}$ that have different reductions `modulo 2', the only possible parameters $\la$ such that both are torsion for $E_{\la}$ are $\al$ and $\beta$. Thus, Stoll improved upon results in \cite{M-Z-0,M-Z-1}. The approach from \cite{Stoll14} involves a careful analysis of the $2-$adic behavior of the $n-$th reduced division polynomial of $E_{\la}$. Furthermore, this approach provides a partial result towards the characterization of the set of parameters $(\mu,\la)$ for which three different points are torsion for $E_{\mu,\la}:y^2=x^3+\mu x+\la$, assuming $\la$ is integral at $2$. In \cite[Proposition 7]{Stoll14} he describes the $2-$adic behavior of the $n-$th reduced division polynomial of $E_{\mu,\la}$. However when $\mu=0$ this result does not give precise information on the parameters $\la$ for which a point with constant $x-$coordinate is torsion for $E_{0,\la}:y^2=x^3+\la$. It is primarily this situation that we aim to address here.

In this article we are mainly interested in a Weierstrass family of elliptic curves $E_{\la}:y^2=x^3+\la$, parametrized by $\la\in\bC_2\setminus\{0\}$, where $\mathbb{C}_2$ denotes the completion of $\overline{\bQ_2}$ with respect to the $2-$adic absolute value. Letting $T(\al)$ denote the set of all parameters $\la\in \mathbb{C}_2$ such that $(\al,\sqrt{\al^3+\la})$ is torsion for $E_{\la}$, we establish the following theorem which is one of our main results.

\begin{theorem}\label{mainthm}
If $\al,\beta\in\overline{\bQ_2}\setminus\{0\}$ are such that $\frac{\al}{\beta}\in\bQ\setminus\{-2,-\frac{1}{2}\}$, then $T(\al)\cap T(\beta)=\emptyset$. Moreover, for all $a\in\overline{\bQ_2}\setminus\{0\}$ we have $T(a) \cap T(-2a)=\{-a^3\}$.
\end{theorem}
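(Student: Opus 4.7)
The plan is to translate torsion for the Weierstrass family into preperiodicity for the associated family of Lattès maps on $\mathbb{P}^{1}$ and then to exploit the existence of a $2$-adically attracting fixed point at $x=0$. First I will reduce to a normalized setting: the scaling isomorphism $(x,y)\mapsto(u^{2}x,u^{3}y)\colon E_{\lambda}\xrightarrow{\sim}E_{u^{6}\lambda}$, with $u^{2}=\alpha^{-1}$, yields the identity $T(\alpha)=\alpha^{3}T(1)$. Writing $r=\beta/\alpha\in\mathbb{Q}$ (so that $r\ne 1$ corresponds to $\alpha\ne\beta$), the first assertion reduces to showing $T(1)\cap r^{3}T(1)=\emptyset$ for $r\in\mathbb{Q}\setminus\{-2,-1/2\}$, and the moreover statement to $T(1)\cap(-8)\,T(1)=\{-1\}$. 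By the standard identification of torsion with preperiodicity, $T(\alpha)$ is the set of $\mu\in\mathbb{C}_{2}$ for which $\alpha$ is preperiodic under the doubling-on-$x$ map
\[
\phi_{\mu}(x)=\frac{x(x^{3}-8\mu)}{4(x^{3}+\mu)}.
\]

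A short computation gives $\phi_{\mu}(0)=0$ and $\phi_{\mu}'(0)=-2$, so $0$ is a $2$-adically attracting fixed point with multiplier of absolute value $1/2$; an analogous change of coordinates at $\infty$ shows that $\infty$ is also attracting, with local multiplier $4$. I will show that on an explicit disk $\{|x|_{2}<c\,|\mu|_{2}^{1/3}\}$ the map $\phi_\mu$ acts as $-2x$ to leading order, placing this disk in the immediate basin of $0$, while $\{|x|_{2}>c'\,|\mu|_{2}^{1/3}\}$ lies in the basin of $\infty$. Inside either basin, preperiodicity of a point forces its orbit to land exactly on the attracting fixed point in finitely many steps, which is an explicit polynomial condition on $\mu$. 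Carrying this out for $\alpha=1$ and $\alpha=r$ and inspecting the $2$-adic Newton polygons of the iterates $\phi_\mu^{n}(1)$ and $\phi_\mu^{n}(r)$, I will show that the resulting admissible sets of $\mu$'s can overlap only when $r^{3}\in\{-8,-1/8\}$, i.e., $r\in\{-2,-1/2\}$.

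The moreover statement is then verified directly: for $\mu=-1$, the point $(1,0)\in E_{-1}$ is $2$-torsion, and a short computation gives $\phi_{-1}(-2)=\frac{(-2)((-8)+8)}{4((-8)-1)}=0$; combined with the fact that $x=0$ is a root of $\psi_{3}(x,\mu)=3x(x^{3}+4\mu)$ at $\mu=-1$, this exhibits $(-2,3\sqrt{-1})$ as a $6$-torsion point on $E_{-1}$. Uniqueness of $\mu=-1$ follows from the rigidity provided by the Newton-polygon analysis above.

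I expect the main obstacle to lie in the borderline regime where $|\mu|_{2}=|r|_{2}=1$: here the orbits of $1$ and $r$ can linger on the annulus $\{|x|_{2}=1\}$ for several iterations, leaving it only after subtle cancellations between $x^{3}-8\mu$ and $x^{3}+\mu$ force them into one of the basins. Handling this regime will require careful bookkeeping of the $2$-adic Newton polygons of the iterates $\phi_{\mu}^{n}$ viewed as rational functions in $(x,\mu)$, tracking exactly which monomials in $\mu$ contribute to the leading $2$-adic term at each step. The identification of $r=-2$ and $r=-1/2$ as the unique rational exceptions should ultimately emerge from the precise $2$-adic compatibility this requires between the trajectories of $1$ and $r$.
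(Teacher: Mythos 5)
Your overall framework is the same as the paper's (pass to the Latt\`es map, use that $0$ and $\infty$ are $2$-adically attracting fixed points, note that a preperiodic point in a basin must land exactly on the fixed point, and use $T(\al)=\al^3T(1)$ to reduce to the ratio $r=\beta/\al$), and the trichotomy on $|\mu|$ that this yields does dispose of the case $|\al|\neq|\beta|$, where the exceptions $r^3\in\{-8,-1/8\}$ appear. But the heart of the theorem is precisely the case you flag as the ``borderline regime'': $|\al|=|\beta|$, i.e.\ $r=a/b$ with $a,b$ odd coprime integers, where the trichotomy gives no contradiction. For this case your proposal offers only the hope that ``careful bookkeeping of the $2$-adic Newton polygons'' of the iterates will work; no actual argument is given, so as written the proof has a genuine gap exactly where the difficulty lies. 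The paper closes this case with two concrete steps that are absent from your plan: (i) a descent argument (Lemma \ref{toempty}) showing that if $\la\in T(\al)\cap T(\beta)$, $|\al|=|\beta|$ and $|\al-\beta|\le|\al|/2$, and neither orbit meets $0$ or $\infty$, then $|f_\la^n(\al)-f_\la^n(\beta)|=|\al-\beta|/2^n$ for all $n$, contradicting finiteness of the orbits --- note this is also the only place the hypothesis $\al/\beta\in\bQ$ is used (it guarantees $|\al-\beta|\le|\al|/2$ when $|\al|=|\beta|$), a point your sketch never identifies; and (ii) an arithmetic coprimality statement for the numerator and denominator families $A_n(a,t),B_n(a,t)$ of $f_t^n(a)$, namely $\gcd(A_n(a,t),A_n(b,t))=\gcd(B_n(a,t),B_n(b,t))=1$, proved by reducing modulo an odd prime dividing exactly one of $a,b$, together with a separate reduction mod $3$ to handle $\{a,b\}=\{1,-1\}$, for which no such prime exists. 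Without (i) you cannot even assert that both orbits must hit the same exceptional point, and without (ii) the overlap of the ``landing'' conditions is not ruled out.

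Two smaller issues: in the ``moreover'' part you verify that $\la=-a^3$ does lie in $T(a)\cap T(-2a)$ (that computation is fine), but the uniqueness is asserted to follow from ``rigidity provided by the Newton-polygon analysis,'' which is the very analysis you have not carried out; in the paper uniqueness is an immediate consequence of the trichotomy of absolute values, since $|\,{-}2a\,|=|a|/2$ forces any common parameter to be the shared exceptional value $-a^3=(-2a)^3/8$. Also, your conjugation at $0$ (multiplier $-2$) is only the start: to convert ``preperiodic in the basin implies landing on the fixed point in finitely many steps'' into the exact absolute-value statements you need, one must argue as in the paper's Lemma \ref{fourthpower} that $|w|$ is pinned to $|2^{1/3}|^{1/4^m}$ (resp.\ $|4^{-1/3}|^{1/4^m}$), not merely that the orbit contracts; this quantitative step is what makes the trichotomy, and hence the unequal-absolute-value case, rigorous.
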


In order to derive Thoerem \ref{mainthm}, we study the 2-adic absolute values of the elements in $T(\al)$. Our methods are dynamical; we work with an associated family of Latt\`es maps on $\mathbb{P}^1$, taking a quotient of the multiplication by-2 map on $E_{\la}$. With this approach, in Theorem \ref{2empty} and Corollary \ref{1/2empty}, we present an alternative proof and minor strengthening of Stoll's result concerning the Legendre family of elliptic curves. Furthermore, the method applies to other families of rational maps on $\mathbb{P}^1$, which we illustrate with a non-Latt\`es example, $f_{\la}(z)=\frac{z^d+\la}{pz}$, for integer $d\ge 2$ and prime $p\in\bZ$.

\begin{theorem}\label{notLattes}
Let $p\in\bZ$ be a prime and consider the natural reduction map $\rho: \mathbb{P}^1(\bC_p)\to \mathbb{P}^1(\overline{\mathbb{F}_p})$. For $d\in\bZ_{\ge 2}$, let $f_{\lambda}(z)=\frac{z^d+\la}{pz}$. If $\al,\beta\in\mathbb{C}_p\setminus\{0\}$ are such that $\rho(\al)\neq\rho(\beta)$, then there is no parameter $\lambda\in\bC_p$ for which $\alpha$ and $\beta$ are both preperiodic for $f_{\lambda}$.
\end{theorem}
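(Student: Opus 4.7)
The plan is to carry out a $p$-adic dynamical analysis of the map $f_\la(z)=(z^d+\la)/(pz)$, in the same spirit as the Latt\`es analysis earlier in the paper. The key feature of $f_\la$ is that its reduction modulo the maximal ideal of $\mathcal{O}_{\bC_p}$ is degenerate (the factor $p$ in the denominator forces $|f_\la(z)|_p$ to be large outside a very thin locus), so preperiodicity is a rigid condition that should pin down both $|\alpha|_p$ and the residue $\bar\alpha$ in terms of $\bar\la$.

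First I would establish a rigidity lemma: if $\alpha\in\bC_p\setminus\{0\}$ is preperiodic for $f_\la$, then $|\alpha|_p=|\la|_p=1$ and $\bar\alpha^d+\bar\la=0$ in $\overline{\mathbb{F}_p}$. The starting point is the identity
\[
|f_\la(z)|_p \;=\; p\,\frac{|z^d+\la|_p}{|z|_p},
\]
analyzed in the ultrametric by splitting on $(|z|_p,|\la|_p)$. The decomposition $f_\la(z) = z^{d-1}/p + \la/(pz)$ isolates two bad regimes: (a) $|z|_p\geq 1$ with $|\la|_p<|z|_p^d$, where the first term dominates and $|f_\la(z)|_p=p|z|_p^{d-1}>|z|_p$ self-reinforces; and (b) $|z|_p$ very small compared to $|\la|_p^{1/d}$, where the second term forces $|f_\la(z)|_p=p|\la|_p/|z|_p\gg 1$, dumping us back into regime (a). A short iterative argument then shows that every starting point outside the locus $\{|z|_p=1,\,|z^d+\la|_p\leq|z|_p/p\}$ escapes to infinity, so preperiodic $\alpha$ must satisfy $|\alpha|_p=1$ and $|\alpha^d+\la|_p\leq 1/p$. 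The latter is precisely $\bar\alpha^d=-\bar\la$ in $\overline{\mathbb F_p}$, which also forces $|\la|_p=1$.

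Applying this lemma to both $\alpha$ and $\beta$ yields $\bar\alpha^d=-\bar\la=\bar\beta^d$, while the hypothesis $\rho(\alpha)\neq\rho(\beta)$ gives $\bar\alpha\neq\bar\beta$. In the favorable case when $d$ is a power of $p$, the map $x\mapsto x^d$ on $\overline{\mathbb{F}_p}$ is an iterate of Frobenius and hence injective, so $\bar\alpha^d=\bar\beta^d$ directly forces $\bar\alpha=\bar\beta$, a contradiction. In the remaining case $p\nmid d$, where $x^d=-\bar\la$ has $d$ distinct solutions in $\overline{\mathbb F_p}$, I would iterate the preperiodicity constraint once more: writing $\alpha^d+\la=p\gamma_\alpha$ with $\gamma_\alpha\in\mathcal{O}_{\bC_p}$, the iterate $f_\la(\alpha)=\gamma_\alpha/\alpha$ is again preperiodic, so the rigidity lemma applied to it yields a second congruence pinning down $\bar\gamma_\alpha$ in terms of $\bar\alpha$ and $\bar\la$. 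Combining the relations for $\alpha$ and $\beta$, and using the multiplier computation $f_\la'(z)=((d-1)z^d-\la)/(pz^2)$ to exhibit the finite fixed points of $f_\la$ as attracting with pairwise disjoint residue-disc basins, should propagate enough rigidity to force $\bar\alpha=\bar\beta$.

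The main obstacle is exactly this last step: promoting $\bar\alpha^d=\bar\beta^d$ to $\bar\alpha=\bar\beta$ when $p\nmid d$. A priori $\alpha$ and $\beta$ could lie in two distinct attracting basins whose fixed-point residues are different $d$-th roots of $-\bar\la$, so ruling this out requires tracking the entire forward orbits under $f_\la$ rather than a single iterate and exploiting the full dynamical structure of the attractors, not only the first-order constraint coming from the rigidity lemma.
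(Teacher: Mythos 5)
Your instinct that something goes wrong when $p \nmid d$ is correct, and in fact it points to a genuine defect in the statement itself: as written (with hypothesis $\rho(\alpha)\neq\rho(\beta)$) the theorem is \emph{false} when $d$ is not a power of $p$. Take $p=3$, $d=2$, $\lambda=2$: then $f_2(z)=(z^2+2)/(3z)$ has the two fixed points $\alpha=1$ and $\beta=-1$, both of which are preperiodic, with $\rho(1)=1\neq 2=\rho(-1)$ in $\overline{\mathbb{F}_3}$. What Section 4 of the paper actually states and proves is the corrected version with hypothesis $\rho(\alpha^d)\neq\rho(\beta^d)$ (which in the example above is equal on both sides), and this is not a step one can bridge; the proposed strategy of tracking higher iterates or attracting-basin structure to promote $\bar\alpha^d=\bar\beta^d$ to $\bar\alpha=\bar\beta$ cannot succeed because the conclusion is wrong.

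There is also an error earlier in your proposal. Your rigidity lemma claims that preperiodicity forces $|\alpha|_p=1$ and $|\lambda|_p=1$, but this is too strong. Again with $p=3$, $d=2$: take $\lambda=18$; then $\alpha=3$ is a fixed point of $f_{18}(z)=(z^2+18)/(3z)$ (indeed $(9+18)/9=3$), yet $|\alpha|_3=1/3$ and $|\lambda|_3=1/9$. What the paper proves is weaker and correct: if $|\alpha|_p\le 1$ and $\lambda\in T(\alpha)$ then $|\lambda|_p\le 1$ and $|\alpha^d+\lambda|_p<1$; and if $|\alpha|_p>1$ then $|\lambda|_p>1$. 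The argument is the escape iteration $|f_\lambda^{n+1}(\alpha)|_p=p\,|f_\lambda^n(\alpha)|_p^{d-1}$ once the orbit enters $\{|z|_p>1\}$, which is essentially the dynamical mechanism you describe, just without the overclaim $|\alpha|_p=1$. From this one reads off $\rho(\alpha^d)=\rho(-\lambda)$ when $|\alpha|_p\le1$ and $\rho(\alpha^d)=\infty$ when $|\alpha|_p>1$ (in which case $|\lambda|_p>1$ as well), and comparing the two constraints for $\alpha$ and $\beta$ immediately gives the Section 4 conclusion under the hypothesis $\rho(\alpha^d)\neq\rho(\beta^d)$. No second iteration, multiplier computation, or attracting-basin analysis is needed.
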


To put our results in the appropriate context, we will highlight some key features of earlier work. Masser and Zannier's original approach in \cite{M-Z-0,M-Z-1,M-Z-2} involved a key recent result by Pila and Zannier \cite{P-Z} and relied strongly on the existence of the analytic uniformization map for an elliptic curve. They further pointed out a dynamical reformulation of the question based on the fact that a point with $x$ coordinate $a$ is torsion for $E_{\la}$ if and only if $a$ is a preperiodic point for the Latt\`{e}s map induced by the multiplication by $2$ map in $E_{\la}$ (see \ref{Lattes} for the definition of a Latt\`es map). Using this reformulation and the equidistribution results of \cite{Baker-Rumely06,CL,favre-rivera06}, DeMarco, Wang and Ye \cite{DWY13} generalized the aforementioned result for points of small canonical height. Also, motivated by these results and replacing the family of Latt\`{e}s maps by other families of rational maps, many results concerning the finiteness of the set of parameters such that both $a$ and $b$ are preperiodic for a $1-$parameter family of rational maps have appeared in \cite{Baker-DeMarco,GHT-ANT,GHT:preprint}. For an overview on the motivation for these results and an outline of the key ideas in the proofs, we refer the reader to \cite{Zannier-book}.

As opposed to the approach in \cite{M-Z-0,M-Z-1,M-Z-2} which uses a key recent result by Pila and Zannier \cite{P-Z} and relies strongly on the existence of the analytic uniformization map for elliptic curves, and the approach from \cite{Baker-DeMarco,DWY13,GHT-ANT,GHT:preprint}, which uses the powerful equidistribution statements of Baker-Rumely \cite{Baker-Rumely06}, Yuan \cite{Yuan} and Yuan-Zhang \cite{Yuan-Zhang} for points of small height, our method, as outlined next, is much simpler.

The structure of this article is as follows. In Section \ref{1}, we consider a Weierstrass family given by $E_{\lambda}:y^2=x^3+\lambda$ where $\lambda\in \mathbb{C}_2\setminus \{0\}$. We use the Latt\`es map $f_{\la}(z)=\frac{z^4-8\lambda z}{4(z^3+\lambda)}$ induced by the duplication map on $E_{\la}$. More specifically,
using the fact that
\begin{align*}
P_{\la}(\al):=(\al,\sqrt{\al^3+\la}) \text{ is torsion for } E_{\lambda} \Longleftrightarrow\alpha \text{ is a preperiodic point for }f_{\lambda}(z)=\frac{z^4-8\lambda z}{4(z^3+\lambda)},
\end{align*}
in Theorem \ref{trichotomy}, we provide a relation between the $2-$adic absolute values of $\la$ and $\al$ for $\la\in\mathbb{C}_2$ such that $P_{\la}(\al)$ is torsion for $E_{\la}$. This relation strongly depends on whether $0,\infty$ which are both $2-$adically attracting fixed points for all $f_{\lambda}$, belong to the orbit of $\al$ under $f_{\la}$. Furthermore, it is a useful step towards finding pairs $(\al,\beta)\in\mathbb{C}_2^2$ such that $T(\al)\cap T(\beta)=\emptyset$, which is what we consider subsequently. Using Theorem \ref{trichotomy}, we establish results of this flavour in Corollary \ref{difempty}. As a special case, we get that if $\al,\beta\in\bQ^{unr}_2$ have distinct $2-$adic absolute values and $\frac{\al^3}{\beta^3}\notin\{-8,-\frac{1}{8}\}$, then $T(\al)\cap T(\beta)=\emptyset$. An important property of the family of elliptic curves and corresponding Latt\`es maps that we exploit in the proof of Theorem \ref{trichotomy}, is that they are isotrivial (see \ref{isotrivial} and \ref{isotrivialEC}.). 

In Section \ref{2}, building on the results obtained in Section \ref{1}, we proceed to establish Theorem \ref{mainthm}. More precisely, we use Lemma \ref{toempty} to reduce the question to proving the coprimality of certain families of polynomials, which can be done by elementary means.

In Section \ref{3}, to further demonstrate the efficacy of our approach, as well as the fact that our method does not rely on isotriviality in general, we apply our method to give a shorter proof of Stoll's result. As a by-product of our method, in Theorem \ref{2empty} and Corollary \ref{1/2empty}, we obtain a slight strengthening of Stoll's original result, \cite[Corollary 4]{Stoll14}. \footnote{While this article was under review, Stoll, in \cite[Proposition 7]{Stoll14v2}, obtained a stronger result that covers our Theorem \ref{2empty} and Corollary \ref{1/2empty} by incorporating elements of our approach in his proof.}

Finally, we conclude with a discussion on other families of maps which are not Latt\`es in Section \ref{4}, where we establish Theorem \ref{notLattes}.

\section{A Weierstrass family: a trichotomy and some impossible intersections}\label{1}

Let $\mathbb{C}_2$ denote the completion of $\overline{\bQ_2}$ with respect to the $2-$adic absolute value, and let $E_{\la}:y^2=x^3+\la$ where $\la\in\mathbb{C}_2\setminus\{0\}$. 

Before proceeding to our results, we will give some definitions. First, we will define the notion of a Latt\`es map, which plays an important role for our purposes. For a survey on its various remarkable properties, we refer the reader to \cite{Milnor} and \cite[6]{Silverman07}. 

\begin{definition}\cite[Section 6.4]{Silverman07}\label{Lattes}
A rational map $\phi:\mathbb{P}^1\to\mathbb{P}^1$ of degree $d\ge 2$ is called a \emph{Latt\`es map} if there are an elliptic curve $E$, a morphism $\psi: E\to E$, and a finite separable morphism $\pi: E\to \mathbb{P}^1$ such that the following diagram is commutative.
$$\begin{array}[c]{ccc}
E&\stackrel{\psi}{\longrightarrow}&E\\
\downarrow\scriptstyle{\pi}&&\downarrow\scriptstyle{\pi}\\
\mathbb{P}^1&\stackrel{\phi}{\longrightarrow}&\mathbb{P}^1.
\end{array}$$
\end{definition}
 
In the following, we will use the Latt\`es maps induced by the multiplication by $2$ on $E_{\lambda}$ for $\lambda\neq 0$. More precisely, we will use the maps $\mathbf{f_{\lambda}}$ defined by the commutative diagram below.
$$\begin{array}[c]{ccc}
E_{\lambda}&\stackrel{[2]}{\longrightarrow}&E_{\lambda}\\
\downarrow\scriptstyle{\pi}&&\downarrow\scriptstyle{\pi}\\
\mathbb{P}^1&\stackrel{\mathbf{f_{\la}}}{\longrightarrow}&\mathbb{P}^1.
\end{array}$$
Here $\pi: E_{\lambda}\to \mathbb{P}^1$ is the projection map onto the $x-$coordinate of the elliptic curve $E_{\la}$ and $\mathbf{f_{\lambda}}:\mathbb{P}^1\to\mathbb{P}^1$ is given as $\mathbf{f_{\la}}([X:Y]) = [X^4 - 8\la XY^3 : 4Y(X^3+ \la Y^3)]$. We will mainly work with the de-homogenized version of $\mathbf{f_\la}$ defined as $f_{\la}(z)=\frac{z^4-8\lambda z}{4(z^3+\lambda)}$, and in this setting we identify $\infty$ with the point $[1:0]\in\mathbb{P}^1$.

 Note that the family of elliptic curves $\{E_{\lambda}\}_{\{\lambda\in\mathbb{C}_2\setminus\{0\}\}}$ may also be viewed as a single elliptic curve $E:y^2=x^3+t$ defined over the function field $\bC_2(t)$, and the family of Latt\`es maps $\{f_{\lambda}\}_ {\{\lambda\in\mathbb{C}_2\setminus\{0\}\}}$ may  also be viewed as a single rational map $f\in\bC_2(t)(z)$. One important property of this elliptic curve $E$ and this rational function $f$ that will aid to the proof of Theorem \ref{mainthmr}, is that they are isotrivial.

\begin{definition}\label{isotrivialEC}
Let $K$ be an algebraically closed field. An elliptic curve $E$ defined over the function field $K(t)$ is called \emph{isotrivial} if there exists a finite extension $L$ of $K(t)$ and an elliptic curve $\mathcal{E}$ defined over $K$ such that $E$ is $L-$isomorphic to $\mathcal{E}$.
\end{definition}

\begin{example}
For the elliptic curve $E:y^2=x^3+t$ over $\mathbb{C}_2(t)$ and $\mathcal{E}:y^2=x^3+1$ over $\mathbb{C}_2$, we have a $\mathbb{C}_2(t^{1/2},t^{1/3})-$isomorphism as follows.
\begin{align*}
\mathcal{E}&\to E\\
(x,y)&\mapsto (xt^{1/3},yt^{1/2}).
\end{align*}
\end{example}

\begin{definition}\label{isotrivial}
Let $K$ be an algebraically closed field. A rational function $\phi\in K(t)(z)$ is called \emph{isotrivial} if there exists a finite extension $L$ of $K(t)$ and a M\"obius map $M\in \mathrm{PGL}(2,L)$ such that $M^{-1}\circ \phi\circ M\in K(z)$.
\end{definition}

\begin{example}\label{iso2}
For the rational function $f(z)=\frac{z^4-8tz}{4(z^3+t)}\in\bC_2(t)(z)$, we have that if $M(z)=t^{1/3}z\in \mathrm{PGL}(2,\mathbb{C}_2(t^{1/3}))$ then $$M^{-1}\circ f\circ M(z)=f_1(z)=\frac{z^4-8z}{4(z^3+1)}\in\mathbb{C}_2(z).$$ 
\end{example}

This equation reflects the fact that all $E_{\la}$, $\la\neq 0$ are isomorphic, see \cite[Theorem 6.46]{Silverman07}

We denote the $2-$adic absolute value defined on $\mathbb{C}_2$ by $|\cdot|$. With this notation $$\mathbf{|2|=\frac{1}{2}}.$$

Moreover, we note that throughout this article we assume that $$\mathbf{0}\boldsymbol\in\mathbb{N}.$$

Let $\al,\la\in\bC_2$. We denote the orbit of $\al$ under the action of $f_{\la}$ by 
\begin{align*}
\mathcal{O}_{f_{\la}}(\al)=\{f^n_{\la}(\al)~:~n\in\bN\}.
\end{align*}
Here, and in general in this article we write $g^n$ for the $n-$th compositional iterate of a function $g$. Moreover, for $g\in\bC_2(z)$, we write 
\begin{align*}
\mathrm{PrePer}(g)&=\{z\in\bC_2~:~ z\text{ is preperiodic for } g\}\\
&=\{z\in\bC_2~:~ \mathcal{O}_{g}(z)\text{ is a finite set}\}.
\end{align*}
Recall that 
\begin{align*}
T(\al)&=\{\la\in\mathbb{C}_2\setminus\{0\}~:~(\al,\sqrt{\al^3+\la})\in(E_{\la})_{tors}\}\\
&=\{\la\in\mathbb{C}_2\setminus\{0\}~:~\al\text{ is preperiodic for }f_{\la}\}.
\end{align*}
Finally, we note that $0,\infty$ are persistently preperiodic points for the family of rational maps $f_{\la}$, $\la\in\mathbb{C}_2\setminus\{0\}$. Thus, in what follows we assume that $$\boldsymbol\alpha\mathbf{\neq0},\boldsymbol\infty.$$

\subsection{A trichotomy}
\begin{theorem}\label{trichotomy}
Let $\la\in T(\al)$. Then either $\la\in\left\{-\al^3,\frac{\al^3}{8}\right\}$ or
\begin{align*}
|\la|\in\{4|\al|^3,4^{1-(1/4)^m}|\al|^3,2^{2+(1/4)^m}|\al|^3~:~m\in\bN_{\ge 1}\}.
\end{align*}
Moreover, exactly one of the following is true.
\begin{enumerate}
\item
$|\la|=4|\alpha|^3$ $\Longleftrightarrow$ $0,\infty\notin\mathcal{O}_{f_{\la}}(\al)$.
\item
$|\la|=4^{1-(1/4)^m}|\al|^3$ for some $m\in\mathbb{N}_{\ge1}$, or $\la=-\al^3$  $\Longleftrightarrow$ $\infty\in\mathcal{O}_{f_{\la}}(\al)$.
\item
$|\la|=2^{2+(1/4)^m}|\al|^3$ for some $m\in\mathbb{N}_{\ge1}$, or $\la=\frac{\al^3}{8}$  $\Longleftrightarrow$ $0\in\mathcal{O}_{f_{\la}}(\al)$.
\end{enumerate}
\end{theorem}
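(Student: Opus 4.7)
The plan is to use the isotriviality established in Example \ref{iso2} to reduce the problem to the single rational map $f_1(w) = (w^4 - 8w)/(4(w^3+1))$ over $\mathbb{C}_2$: with $M(w) = \lambda^{1/3} w$ one has $M^{-1} \circ f_\lambda \circ M = f_1$, so $\alpha$ is preperiodic for $f_\lambda$ if and only if $w_0 := \alpha/\lambda^{1/3}$ is preperiodic for $f_1$, and $|\lambda|/|\alpha|^3 = |w_0|^{-3}$. Thus it suffices to classify the possible absolute values of preperiodic points of $f_1$, and to match the distinguished values $|w_0|=1$ and $|w_0|=1/2$ with the algebraic conditions $\alpha^3 = -\lambda$ and $\alpha^3 = 8\lambda$, respectively.

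Next, I would partition $\mathbb{C}_2 \setminus \{0,\infty\}$ by $|w|$ and compute $|f_1(w)|$ directly from the ultrametric inequality (using $|4|=1/4$ and $|8|=1/8$):
\begin{align*}
|w| > 1 &\implies |f_1(w)| = 4|w|, \\
1/2 < |w| < 1 &\implies |f_1(w)| = 4|w|^4, \\
|w| < 1/2 &\implies |f_1(w)| = |w|/2.
\end{align*}
Iteration identifies the outer regime $|w|>1$ as the attracting basin of $\infty$ and the inner regime $|w|<1/2$ as the attracting basin of $0$. On the middle annulus, writing $u = \log_2 |w|^3 \in (-3, 0)$, the induced dynamics is the affine map $F(u) = 6 + 4u$, with a unique expanding fixed point $u_\star = -2$ corresponding to $|w| = 2^{-2/3}$.

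The core of the argument is then a case split driven by the finiteness of $\mathcal{O}_{f_1}(w_0)$: no iterate can lie strictly inside the outer or inner regime, so three mutually exclusive possibilities remain. In the first case the orbit stays in the open middle annulus; since $F$ is expanding about $u_\star$, the only bounded $F$-trajectory is $u \equiv -2$, forcing $|w_0| = 2^{-2/3}$, hence $|\lambda| = 4|\alpha|^3$, and the orbit visibly avoids $\{0,\infty\}$. In the second case some minimal iterate $f_1^{k-1}(w_0)$ lies on the outer circle $|w|=1$; I will argue that any partial cancellation in $w^3+1$ pushes $|f_1^k(w_0)|$ strictly above $1$ into the escape regime, contradicting preperiodicity, so the cancellation must be total, i.e., $f_1^{k-1}(w_0)^3 = -1$ and $f_1^k(w_0) = \infty$. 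For $k = 1$ this is the distinguished $\lambda = -\alpha^3$; for $k \geq 2$, backward iteration of the recursion $u_{j+1} + 2 = 4(u_j + 2)$ starting from $u_{k-1} = 0$ yields $u_0 = -2 + 2 \cdot (1/4)^{k-1}$, hence $|\lambda|/|\alpha|^3 = 4^{1-(1/4)^{k-1}}$, matching (2) with $m = k-1$. The symmetric analysis at the inner circle $|w| = 1/2$ produces $\lambda = \alpha^3/8$ when $k=1$, or $|\lambda|/|\alpha|^3 = 2^{2+(1/4)^{k-1}}$ when $k \geq 2$.

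The delicate step will be the boundary analysis in the second and third cases: at $|w|=1$ or $|w|=1/2$ the ultrametric inequality can be strict, with cancellations of arbitrary magnitude, so I expect to write $|w^3+1| = \delta$ (resp.\ $|w^3-8| = \delta$), carry out the one-step computation of $|f_1(w)|$ in terms of $\delta$, and verify that any $\delta > 0$ (anything short of total cancellation) ejects the next iterate into the escape regime. Once this boundary dichotomy is secured, the ``if and only if'' statements in (1)--(3) follow immediately, because the three scenarios are pairwise exclusive (the orbit can hit at most one of the distinct fixed points $0,\infty$) and the value sets $\{4\}$, $\{4^{1-(1/4)^m}\}_{m \geq 1}$ (all strictly less than $4$), and $\{2^{2+(1/4)^m}\}_{m \geq 1}$ (all strictly greater than $4$) are pairwise disjoint.
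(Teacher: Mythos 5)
Your plan is correct: every one-step valuation computation you outline does check out, and assembled as you describe it yields the stated trichotomy. The route is a variant of the paper's rather than a new argument, with the difference lying in the normalization. The paper conjugates by $(4\lambda)^{1/3}$ rather than $\lambda^{1/3}$, landing on the good-reduction model $g(z)=\frac{z^4-2z}{4z^3+1}$ of \eqref{goodreduction}; there the whole middle regime collapses to the unit circle, which is invariant by the ultrametric inequality alone, and the content of the proof sits entirely in the attracting basins of $0$ and $\infty$ (Remark \ref{Taylor} and Lemma \ref{attracting}) plus the fourth-power bookkeeping of Lemma \ref{fourthpower}, packaged as Proposition \ref{trichotomyg}. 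Working with $f_1(w)=\frac{w^4-8w}{4(w^3+1)}$ instead, you pay for the absence of good reduction with two extra pieces: the middle-annulus argument (the expanding affine map $u\mapsto 6+4u$ on $u=\log_2|w|^3$, whose only bounded trajectory is $u\equiv -2$), and the cancellation analysis on the boundary circles $|w|=1$ and $|w|=\tfrac12$, which plays the role of the paper's basin lemma; your backward recursion $u_{j+1}+2=4(u_j+2)$ is the scaled-coordinate form of the paper's inductive conclusion $|w|^{4^n}=|2^{1/3}|$ (resp.\ $|4^{1/3}|$). So both proofs are the same $2$-adic orbit-tracking in substance; the paper's normalization trivializes the ``neither $0$ nor $\infty$'' case, while yours keeps the conjugacy simpler at the cost of the expansion argument.

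Two small points to tighten in the write-up. First, on the inner circle $|w|=\tfrac12$, a partial cancellation $0<|w^3-8|\le\tfrac18$ sends the next iterate into the region $|w|<\tfrac12$, i.e.\ into the attracting basin of $0$, not into the outer escape region; the contradiction there is that the absolute values then shrink by a factor $2$ at every step while the orbit never actually attains $0$ (that would require $|w|=\tfrac12$), so the orbit is infinite --- this is already covered by your observation that no iterate can sit strictly inside the inner regime, but the phrase ``escape regime'' should be adjusted for this case. Second, to legitimize running the backward recursion from the first boundary hit, state explicitly that a boundary hit with total cancellation terminates the orbit at the fixed point $0$ or $\infty$; consequently a preperiodic orbit meets at most one boundary circle, at most once, and all iterates preceding that hit lie in the open middle annulus, so the formula $|f_1(w)|=4|w|^4$ applies to each of them.
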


The isotriviality of $f(z)=\frac{z^4-8tz}{4(z^3+t)}\in\bC_2(t)(z)$ will play an important role in the proof of this theorem. We find it worthwhile to point out that if
$L(z)=(4t)^{1/3}z\in \mathrm{PGL}(2,\mathbb{C}_2(t^{1/3}))$ then 
\begin{align}\label{goodreduction}
L^{-1}\circ f\circ L(z)=\frac{z^4-2z}{4z^3+1}=g(z)\in\mathbb{C}_2(z).
\end{align}
The map $g$ here is the Latt\`es map corresponding to the multiplication by $2$ on the elliptic curve $y^2=x^3+\frac{1}{4}$, and has the property that it exhibits $2-$adic good reduction (see \cite[Section 2.5]{Silverman07} for definition). 

For the rest of this section, we write
\begin{align*}
g(z)=\frac{z^4-2z}{4z^3+1}\in\bC_2(z)\text{ as in \eqref{goodreduction}}. 
\end{align*}

Thoerem \ref{trichotomy} will be a consequence of the following proposition.

\begin{proposition}\label{trichotomyg}
Let $w\in\mathrm{Preper}(g)\setminus\{0,\infty\}$. Then, exactly one of the following holds.
\begin{enumerate}
\item
$|w|=1$ $\Longleftrightarrow$ $0,\infty\notin\mathcal{O}_{g}(w)$.
\item
$|w|=|4^{-1/3}|^{\frac{1}{4^m}}$ for some $m\in\bN_{\ge 1}$, or $w^3=-\frac{1}{4}$ $\Longleftrightarrow$ $\infty\in\mathcal{O}_{g}(w)$.
\item
$|w|=|2^{1/3}|^{\frac{1}{4^m}}$ for some $m\in\bN_{\ge 1}$, or $w^3=2$ $\Longleftrightarrow$ $0\in\mathcal{O}_{g}(w)$.
\end{enumerate}
\end{proposition}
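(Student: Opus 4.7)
The plan is to exploit that $g(z)=\frac{z^4-2z}{4z^3+1}$ has everywhere good $2$-adic reduction, with reduction $\bar g(z)=z^4$ on $\mathbb{P}^1(\overline{\mathbb{F}_2})$. In particular both $0$ and $\infty$ are $2$-adically attracting fixed points (the multipliers are $-2$ and $1/4$), and the unit circle $\{|z|=1\}$ is $g$-invariant. The entire proof then becomes a careful computation of $|g(z)|$ as a function of $|z|$.

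First, applying the ultrametric inequality to the numerator $z^4-2z$ and the denominator $4z^3+1$ yields a regime-by-regime formula for $|g(z)|$. The outputs that matter are: $|g(z)|=1$ when $|z|=1$; $|g(z)|=|z|^4$ for $2^{-1/3}<|z|<1$ and $|g(z)|=|z|/2$ for $|z|<2^{-1/3}$; symmetrically $|g(z)|=|z|^4$ for $1<|z|<2^{2/3}$ and $|g(z)|=4|z|$ for $|z|>2^{2/3}$. The boundary values $|z|=2^{-1/3}$ and $|z|=2^{2/3}$ can produce additional cancellation only at $z^3=2$ and $z^3=-1/4$ respectively, where $g(z)\in\{0,\infty\}$; away from those isolated points one still has a strict drop (respectively a strict jump). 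From this table the absolute value strictly decreases under $g$ on $\{0<|z|<1\}$ and strictly increases on $\{|z|>1\}$. Since $\mathcal{O}_g(w)$ is finite, this immediately yields the equivalences $|w|=1\Leftrightarrow 0,\infty\notin\mathcal{O}_g(w)$, $|w|<1\Leftrightarrow 0\in\mathcal{O}_g(w)$, and $|w|>1\Leftrightarrow \infty\in\mathcal{O}_g(w)$, which is the iff structure of (1), (2), (3).

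It remains to determine precisely which $|w|$ occur in the last two alternatives. For $\infty\in\mathcal{O}_g(w)$ I would set $N=\min\{n\ge 0 : g^n(w)=\infty\}$ and $u_k=g^{N-k}(w)$. Then $u_1$ is a pole of $g$, so $4u_1^3+1=0$ and $|u_1|=2^{2/3}$; at $N=1$ this is the exceptional case $w^3=-1/4$. For $N\ge 2$, I invert $g(u_{k+1})=u_k$ using the regime table and the inductive hypothesis $1<|u_k|<2^{2/3}$ (which holds at $k=1$ and is preserved): the regimes $|u_{k+1}|\ge 2^{2/3}$ give $|g(u_{k+1})|\ge 2^{8/3}$ and the regimes $|u_{k+1}|\le 1$ give $|g(u_{k+1})|\le 1$, both inconsistent with $|u_k|\in(1,2^{2/3})$. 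The only surviving regime is $1<|u_{k+1}|<2^{2/3}$ with $|u_{k+1}|=|u_k|^{1/4}$. Iterating gives $|w|=|u_N|=2^{(2/3)/4^{N-1}}=|4^{-1/3}|^{1/4^{m}}$ with $m=N-1\ge 1$. The case $0\in\mathcal{O}_g(w)$ is entirely symmetric: $u_1^3=2$ with $|u_1|=2^{-1/3}$, and the analogous recursion gives $|w|=|2^{1/3}|^{1/4^{m}}$.

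The main obstacle, such as it is, is pure bookkeeping: one must carefully enumerate the boundary regimes $|z|\in\{2^{\pm 1/3},2^{\pm 2/3}\}$ to confirm they contribute nothing beyond the two exceptional cubics $z^3\in\{2,-1/4\}$, and verify in the backward recursion that at each step the disallowed regimes lead to numerical contradictions, so that $|u_{k+1}|$ is uniquely pinned down from $|u_k|$. Both reduce to elementary non-Archimedean inequality checks rather than anything structurally deep.
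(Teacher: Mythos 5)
Your proposal is correct and takes essentially the same route as the paper. Both arguments rest on the observation that $0$ and $\infty$ are $2$-adically attracting fixed points of $g$, so a finite $g$-orbit in $\{0<|z|<1\}$ (resp.\ $\{|z|>1\}$) must land on $0$ (resp.\ $\infty$), and both then pin down the admissible absolute values via the same ultrametric estimate $|g(z)|=|z|^4$ on the relevant annuli. The only difference is packaging: the paper extracts that estimate from the Taylor series of $g$ at $0$ and of the conjugate $\phi(z)=1/g(1/z)$ (Remark \ref{Taylor}), then runs a forward induction in Lemma \ref{fourthpower} starting from $(g^n(w))^3=2$ (resp.\ $(\phi^n(w))^3=-4$), whereas you tabulate $|g(z)|$ regime by regime and invert it backward from the pole. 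One small slip in your write-up: the inductive hypothesis $1<|u_k|<2^{2/3}$ fails at $k=1$, since $|u_1|=2^{2/3}$ exactly; you should either start the strict-inequality induction at $k=2$ (the inversion from $|u_1|=2^{2/3}$ to $|u_2|=2^{1/6}$ is then the base case, and it does go through because the boundary $|z|=2^{2/3}$ can only yield $|g(z)|\ge 2^{8/3}$ or $g(z)=\infty$, both excluded) or weaken the hypothesis to $1<|u_k|\le 2^{2/3}$. This is purely a bookkeeping fix and does not affect the conclusion.
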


Assume for the moment that the aforementioned proposition holds, for the sake of establishing Theorem \ref{trichotomy}.

\begin{proof}[Proof of Theorem \ref{trichotomy}]

The proof is a consequence of the isotriviality of $f\in\bC_2(t)(z)$ and Proposition \ref{trichotomyg} as follows. In view of \eqref{goodreduction}, we get that for all $n\in\bN$ and $\la,\al\in\bC_2$
\begin{align*}
f^n_{\la}(\al)=(4\la)^{1/3}g^n\left(\frac{\al}{(4\la)^{1/3}}\right).
\end{align*}
This implies $\mathcal{O}_{f_{\la}}(\al)=(4\la)^{1/3}\mathcal{O}_{g}\left(\frac{\al}{(4\la)^{1/3}}\right)$, and thus 
$$\la\in T(\al)\Longleftrightarrow \frac{\al}{(4\la)^{1/3}}\in \mathrm{PrePer}(g).$$
Hence, to find the elements of $T(\al)$ it suffices to find the preperiodic points of $g$, as in Proposition \ref{trichotomyg}.
\end{proof}

We now return to the proof of Proposition \ref{trichotomyg}. For this purpose, we will need the following lemmas, which exploit the fact that $0$ and $\infty$ are both $2-$adically attracting fixed points of the map $g$ with multipliers $-2$ and $4$ respectively, as we can see in the following remark.

\begin{remark}\label{Taylor}
\em{
For $z\in D(0,1)=\{z\in\bC_2~:~|z|<1\}$, we may write
\begin{align*}
g(z)=-2z-\frac{9z}{4}\displaystyle\sum_{\substack{n\ge 1}}(-4z^3)^n.
\end{align*}
Moreover, for $\phi(z)=\frac{1}{g(1/z)}=\frac{z^4+4z}{1-2z^3}\in\bC_2(z)$ and $z\in D(0,1)$ we have
\begin{align*}
\phi(z)=4z+\frac{9z}{2}\displaystyle\sum_{\substack{n\ge 1}}(2z^3)^n.
\end{align*}}
\end{remark}

\begin{lemma}\label{attracting}
If $w\in D(0,1)$, then as $n \to \infty$ both $g^n(w)\to 0$ and $\phi^n(w)\to 0$. In particular,
\begin{itemize}
\item
if $w\in\mathrm{PrePer}(g)$, then $g^m(w)=0$ for some $m\in\bN$.
\item
if $w\in\mathrm{PrePer}(\phi)$, then $\phi^k(w)=0$ for some $k\in\bN$.
\end{itemize}
\end{lemma}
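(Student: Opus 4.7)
The plan is to exploit the Taylor expansions from Remark \ref{Taylor} to show that $0$ is a $2$-adically attracting fixed point for both $g$ and $\phi$, and that the entire disc $D(0,1)$ lies in its basin of attraction.

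The first step is to derive sharp ultrametric bounds. Rewriting the tail of the expansion as $g(z) + 2z = \sum_{n \ge 1} 9 \cdot (-1)^{n+1} \cdot 4^{n-1} z^{3n+1}$, one sees that the $n$-th term has $2$-adic absolute value $(1/4)^{n-1}|z|^{3n+1}$, which for $|z| < 1$ is maximised at $n=1$; thus $|g(z)+2z| \le |z|^4$. By the ultrametric inequality,
\[
|g(z)| \le \max\!\bigl(|z|/2,\, |z|^4\bigr),
\]
and since $|z| < 1$ makes both quantities strictly less than $|z|$, we conclude $|g(z)| < |z|$ for every nonzero $z \in D(0,1)$. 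In particular $g$ stabilises $D(0,1)$, so iteration is well-defined there. An analogous manipulation of the expansion for $\phi$ gives $|\phi(z)-4z| \le |z|^4$ and hence $|\phi(z)| \le \max(|z|/4,\,|z|^4) < |z|$ on $D(0,1)\setminus\{0\}$.

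To upgrade strict decrease to genuine convergence, I would split into two regimes for $r_n := |g^n(w)|$. When $r_n \le 2^{-1/3}$ the inequality $r_n^4 \le r_n/2$ holds, so $r_{n+1} \le r_n/2$ and consequently $r_{n+k} \le r_n \cdot 2^{-k} \to 0$. When $r_n > 2^{-1/3}$ one instead has $r_{n+1} \le r_n^4$; while this regime persists, $r_n \le r_0^{4^n} \to 0$ super-exponentially, so after finitely many steps $r_n$ drops below $2^{-1/3}$ and the first regime takes over. Either way $g^n(w) \to 0$, and the identical two-regime argument (with threshold $4^{-1/3}$ in place of $2^{-1/3}$) yields $\phi^n(w) \to 0$.

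For the \emph{in particular} statements, suppose $w \in D(0,1)$ is preperiodic for $g$. Then $\mathcal{O}_g(w)$ is a finite subset of $\bC_2$; a sequence taking only finitely many values and converging to $0$ must eventually be constantly $0$, giving $g^m(w) = 0$ for some $m \in \bN$. Identical reasoning handles $\phi$. The only subtle point in the whole argument is precisely this two-regime analysis: the ratio $|g(z)|/|z|$ is not bounded away from $1$ as $|z| \uparrow 1$, so the strict decrease $|g(z)| < |z|$ alone does not force convergence—it is the super-exponential bound $r_{n+1} \le r_n^4$ in the "large" regime that guarantees $r_n$ actually reaches the genuinely contractive regime in finitely many steps.
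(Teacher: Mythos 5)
Your proof follows exactly the paper's route: from the Taylor expansions in Remark \ref{Taylor} you derive the same ultrametric bounds $|g(w)|\le\max\{|w|/2,|w|^4\}$ and $|\phi(w)|\le\max\{|w|/4,|w|^4\}$, conclude convergence to $0$, and then use finiteness of the orbit to force eventual arrival at $0$. The only difference is that you spell out the convergence step (which the paper leaves as ``we infer'') with an explicit two-regime argument, and that elaboration is correct.
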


\begin{proof}
In view of  Remark \ref{Taylor}, we have that if $w\in D(0,1)$, then $|g(w)|\le\max\left\{\frac{|w|}{2},|w|^4\right\}$ and $|\phi(w)|\le\max\left\{\frac{|w|}{4},|w|^4\right\}$. Thus, we infer that as $n\to \infty$ both $g^n(w)\to 0$ and $\phi^n(w)\to 0$. The rest of the statement now follows.
\end{proof}

\begin{remark}
\em{The above lemma follows from a more general fact about maps $f\in K(z)$ with good reduction and having an attracting fixed point, where $K$ is a local field, see \cite[Lemma 2.3]{Benedetto}. It implies that if $a\in\overline{K}$ is an attracting fixed point of $f$, then all the preperiodic points of $f$ that lie in the residue class of $a$ must map to $a$. }
\end{remark}

\begin{lemma}\label{fourthpower}
Let $n\in\bN$ and $w\in D(0,1)$. Then the following hold.
\begin{itemize}
\item
If $|g(w)|=|2^{1/3}|^{\frac{1}{4^n}}$, then $|w|^4=|2^{1/3}|^{\frac{1}{4^n}}$.
\item
If $|\phi(w)|=|4^{1/3}|^{\frac{1}{4^n}}$, then $|w|^4=|4^{1/3}|^{\frac{1}{4^n}}$.
\end{itemize}
Furthermore, if $n\in\bN$ is the smallest integer such that $g^{n+1}(w)=0$, then $|w|^{4^n}=|2^{1/3}|$. Similarly, if $n\in\bN$ is the smallest integer such that $\phi^{n+1}(w)=0$, then $|w|^{4^n}=|4^{1/3}|$.
\end{lemma}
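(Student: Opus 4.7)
The plan is to reduce both bulleted claims to a case analysis of the ultrametric absolute value $|g(w)|$ (respectively $|\phi(w)|$), and then to deduce the ``Furthermore'' assertion by a short downward induction. First I would factor $g(w) = w(w^3 - 2)/(4w^3+1)$ and observe that $|4w^3 + 1| = 1$ on $D(0,1)$, so $|g(w)| = |w|\cdot |w^3 - 2|$. The ultrametric inequality then yields three regimes according to whether $|w|^3$ is greater than, less than, or equal to $|2|$: in these cases $|g(w)|$ equals $|w|^4$, $|w|/2$, or is at most $|w|/2$, respectively. The key observation will be that for $n\in\bN$ the target value $|2^{1/3}|^{1/4^n}$ lies in $[|2^{1/3}|,1)$, which strictly exceeds $|2^{1/3}|/2$; this rules out the latter two regimes and forces $|w|^4 = |g(w)| = |2^{1/3}|^{1/4^n}$. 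The $\phi$-bullet is entirely parallel, based on $\phi(w) = w(w^3 + 4)/(1 - 2w^3)$, $|1 - 2w^3| = 1$ on $D(0,1)$, and the pivot value $|4^{1/3}|$ in place of $|2^{1/3}|$.

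For the ``Furthermore'' claim, I would set $w_k := g^k(w)$ and first note that $D(0,1)$ is forward-invariant under $g$ (immediate from the estimates above, since $|g(w)| \le \max(|w|/2,|w|^4) < 1$), so every $w_k$ remains in $D(0,1)$. Minimality of $n$ forces $w_n \neq 0$ and $g(w_n) = 0$; clearing denominators yields $w_n^3 = 2$, whence $|w_n| = |2^{1/3}|$. A downward induction on $k = 0,1,\ldots,n$, applying the first bullet at each step to the preceding iterate, then gives $|w_{n-k}| = |2^{1/3}|^{1/4^k}$, and setting $k = n$ produces $|w|^{4^n} = |2^{1/3}|$. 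The $\phi$-version is identical, with $u^3 = -4$ (so $|u| = |4^{1/3}|$) serving as the base of the induction.

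I do not expect a genuine obstacle here: the argument amounts to careful bookkeeping of non-Archimedean sizes. The one point requiring vigilance is verifying that the hypothesized size $|2^{1/3}|^{1/4^n}$ (respectively $|4^{1/3}|^{1/4^n}$) really does fall strictly above the threshold $|2^{1/3}|/2$ (respectively $|4^{1/3}|/4$) for every $n\in\bN$; this is exactly where the assumption $1/4^n \le 1$ is used, and it is what excludes the ``small $|w|$'' and boundary regimes, leaving only the clean case $|g(w)| = |w|^4$.
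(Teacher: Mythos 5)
Your proposal is correct and follows essentially the same route as the paper: a case split on $|w|^3$ versus $|2|$ (resp.\ $|4|$), the ultrametric inequality forcing $|g(w)|=|w|^4$ in the only regime compatible with the hypothesized value, and then the downward induction from $w_n^3=2$ (resp.\ $u^3=-4$) for the ``Furthermore'' part. The only cosmetic difference is that you work with the exact factorizations $g(w)=\frac{w(w^3-2)}{4w^3+1}$ and $\phi(w)=\frac{w(w^3+4)}{1-2w^3}$, whereas the paper runs the same size bookkeeping through the Taylor expansions of Remark \ref{Taylor}; both yield the identical trichotomy and conclusion.
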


\begin{proof}
Let $n\in\bN$ and $w\in D(0,1)$. Using our hypothesis and the Taylor expansion in Remark \ref{Taylor}, we have 
\begin{align*}
|g(w)|=\left|-2w-\frac{9w}{4}\displaystyle\sum_{\substack{n\ge 1}}(-4w^3)^n\right|=|2^{1/3}|^{\frac{1}{4^n}}.
\end{align*}
If $|w^3|\le |2|$, using the ultrametic inequality, we infer that 
\begin{align*}
|2^{1/3}|^{\frac{1}{4^n}}\le\max \{|2w|, |9w^4|\}\le |2|^{4/3},
\end{align*}
which contradicts the fact that $|2|<1$. Therefore, we must have $|w^3|>|2|$. Another application of the ultrametric inequality now yields that
\begin{align*}
|g(w)|=|w|^4=|2^{1/3}|^{\frac{1}{4^n}},
\end{align*}
as claimed in the statement of the lemma. Now, if $n\in\bN$ is the smallest integer such that $g^{n+1}(w)=0$, then $(g^n(w))^3=2$,  and hence $|g^n(w)|=|2^{1/3}|$. Inductively, we get $|w|^{4^n}=|2^{1/3}|$.

The case of $\phi$ is similar. In view of Remark \ref{Taylor} and our hypothesis we have 
\begin{align*}
|\phi(w)|=\left|4w+\frac{9w}{2}\displaystyle\sum_{\substack{n\ge 1}}(2w^3)^n\right|=|4^{1/3}|^{\frac{1}{4^n}}.
\end{align*}
If $|w^3|\le |4|$, the ultrametric inequality yields $|4^{1/3}|^{\frac{1}{4^n}}\le\max\{|4w|, |9w^4|\}\le |4|^{4/3}$, contradicting the fact that $|4|<1$. Thus, $|w^3|>|4|$ and $|\phi(w)|=|w|^4=|4^{1/3}|^{\frac{1}{4^n}}$. Now, if $n\in\bN$ is the smallest integer such that $\phi^{n+1}(w)=0$, then $(\phi^n(w))^3=-4$, and hence $|\phi^n(w)|=|4^{1/3}|$. Inductively, we get $|w|^{4^n}=|4^{1/3}|$. This finishes the proof of the lemma.
\end{proof}

We can now piece together the previous lemmas to prove Proposition 2.7.

\begin{proof}[Proof of Proposition \ref{trichotomyg}]
Let $w\in\mathrm{PrePer}(g)$. We consider the cases $|w|=1$, $|w|<1$ and $|w|>1$ separately. 

If $|w|=1$, then the ultrametric inequality yields that $|g^n(w)|=1$ for all $n\in\bN$ and in particular $0,\infty\notin\mathcal{O}_{g}(w)$. 

If $|w|>1$, then $z=\frac{1}{w}\in\mathrm{PrePer}(\phi)\cap D(0,1)$ and hence Lemma \ref{attracting} yields $\phi^{m+1}(z)=0$ for some $m\in\bN$. This, by using Lemma \ref{fourthpower}, implies $|z|^{4^m}=|4^{1/3}|$ and hence $|w|=|4^{-1/3}|^{\frac{1}{4^m}}$. If, in particular, we have $m=0$, then we immediately get $w^3=-\frac{1}{4}$.

Finally, assume $|w|<1$. By Lemma \ref{attracting} we have that $g^{m+1}(w)=0$ for some $m\in\bN$. Lemma \ref{fourthpower} now yields $|w|=|2^{1/3}|^{\frac{1}{4^m}}$. In the case $m=0$ we get $w^3=2$.
The proposition is now established.
\end{proof}

We conclude this section with some related remarks.
\begin{remark}\label{moreattractingpoints}
\em{We find it worthwhile to mention that $g$ has three other $2-$adically attracting fixed points, namely $-1$, $-\xi$, and $-\xi^2$, each with multiplier $-2$, where $\xi$ is a cube root of unity. These points give information about the preperiodic points of $g$ of flavor similar to the cases of $0$ and $\infty$. More specifically, for $w\in\mathrm{PrePer}(g)$, the following hold.
\begin{itemize}
\item
If $w\in D(-1,1)$, then there exists an $m\in\bN$ such that $g^{m+1}(w)=-1$. For the smallest such $m$, we have the equality $|w+1|=|2^{1/3}|^{\frac{1}{4^m}}$.
\item
If $w\in D(-\xi,1)$, then there exists an $m\in\bN$ such that $g^{m+1}(w)=-\xi$. For the smallest such $m$, we have the equality $|w+\xi|=|2^{1/3}|^{\frac{1}{4^m}}$.
\item
If $w\in D(-\xi^2,1)$, then there exists an $m\in\bN$ such that $g^{m+1}(w)=-\xi^2$. For the smallest such $m$, we have the equality $|w+\xi^2|=|2^{1/3}|^{\frac{1}{4^m}}$.
\end{itemize}
The proof follows along the same lines as Proposition \ref{trichotomyg}. We will briefly sketch the case of $-1$. For $z\in D(-1,1)$, we have
$$|g(z)+1|=|-2(z+1)-6(z+1)^2-16(z+1)^3-43(z+1)^4+R(z+1)|,$$
where $|R(z+1)|\le |z+1|^5$.
This implies that if $w\in\mathrm{PrePer}(g)\cap D(-1,1)$, then there exists a smallest $m\in\bN$ such that $g^{m+1}(w)=-1$, which in turn yields $|w+1|^4=|2^{1/3}|^{\frac{1}{4^n}}$. For the later, notice that if $|g(w)+1|\ge|2^{1/3}|$ then the ultrametric inequality yields $|g(w)+1|=|w+1|^4$. 
}
\end{remark}

\begin{remark}\label{periodicpoints}
\em{ Notice that $g$ has infinitely many periodic points. One way to see this is by recalling that $g$ is the Latt\`es map corresponding to the duplication map on $E:y^2=x^3+\frac{1}{4}$ and hence its periodic points are the $x-$coordinates of the points in $\displaystyle\cup_{n\in\bN_{\ge 1}}E[2^n-1]$.

Moreover, if $a\in\bC_2\setminus\{0,-1,-\xi,-\xi^2,\infty\}$ is a periodic point of $g\in\bC_2(z)$, then $|a|=|a+1|=|a+\xi|=|a+\xi^2|=1$. 
To see this, note that otherwise by Lemma \ref{trichotomy} and Remark  \ref{moreattractingpoints}, the orbit of $a$ under the action of $g$ meets a fixed point of $g$, contradicting the periodicity of $a$.
}
\end{remark}

\begin{remark}
\em{ The only $\bQ_2-$preperiodic points of $g\in\bC_2(z)$ are the $\bQ_2-$fixed points of $g$, that is $0$, $\infty$, and $-1$. To see this note that if $z\in\bQ_2\cap\mathrm{PrePer}(g)\setminus\{0,\infty,-1\}$ then either $|z|<1$ or $|z|>1$ or $|z+1|<1$, in which case Lemma \ref{fourthpower} and Remark \ref{moreattractingpoints} yield that there exists an $n\in\bN$ such that $|z|=|2^{1/3}|^{\frac{1}{4^n}}$ or $|z|=|4^{-1/3}|^{\frac{1}{4^n}}$ or $|z+1|=|2^{1/3}|^{\frac{1}{4^n}}$ respectively, contradicting the fact that $z\in\bQ_2$.}
\end{remark}

\begin{remark}\label{allareassumed}
\em{Observe that all the absolute values for $\la\in T(\al)$ that appear in Theorem \ref{trichotomy} do indeed occur, from which it immediately follows that $T(\al)$ is an infinite set. To see this, it suffices to prove that all absolute values that appear in Proposition \ref{trichotomyg} for preperiodic points of $g$ do indeed occur. As we have seen, $-1$ is a fixed point of $g$ of absolute value $1$. Let $n\in\bN$. To find $w\in\mathrm{PrePer(g)}$ such that $|w|=|2^{1/3}|^{\frac{1}{4^n}}$, in view of Lemma \ref{fourthpower}, it suffices to find $w\in\bC_2$ such that $g^{m+1}(w)=0$ and $g^m(w)\neq 0$. This can be achieved for $w\in\bC_2$ satisfying $(g^m(w))^3=2$. Analogously, to find $w\in\mathrm{PrePer(g)}$ such that $|w|=|4^{-1/3}|^{\frac{1}{4^n}}$, by Lemma \ref{fourthpower}, it suffices to find $z\in\bC_2$ such that $\phi^{m+1}(z)=0$ and $\phi^m(z)\neq 0$. This can be achieved for $z\in\bC_2$ satisfying $(\phi^m(z))^3=-4$. }
\end{remark}

\subsection{Some applications: impossible intersections}\label{2}

Let $\al,\beta\in\bC_2$. Assuming the existence of $\la\in T(\al)\cap T(\beta)$, Theorem \ref{trichotomy} allows us to compute an explicit list for the possible values of $\frac{|\al|}{|\beta|}$. 

\begin{corollary}\label{diftwoadic}
Assume that $T(\alpha)\cap T(\beta)\neq\emptyset$ and let 
\begin{align*}
X=\left\{1,2^{\frac{1}{4^r}},2^{\frac{1}{3\cdot4^r}},2^{\frac{1}{3}(\frac{1}{4^r}-\frac{1}{4^s})},2^{\frac{2}{3\cdot4^r}},2^{\frac{2}{3}(\frac{1}{4^r}-\frac{1}{4^s})},4^{\frac{1}{3\cdot4^r}}2^{\frac{1}{3\cdot4^s}}~:~r,s\in\bN,r\neq s\right\}.
\end{align*}
Then we have that either $\frac{|\al|}{|\beta|}\in X$ or $\frac{|\beta|}{|\al|}\in X$.
Moreover, $\frac{|\al|}{|\beta|}=\frac{1}{2}$ or $\frac{|\al|}{|\beta|}=2$ if and only if $\frac{\al^3}{\beta^3}=-8$ or $\frac{\al^3}{\beta^3}=-\frac{1}{8}$ respectively.
\end{corollary}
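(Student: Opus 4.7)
The plan is to apply Theorem \ref{trichotomy} to each of $\al$ and $\beta$, using the common parameter $\la \in T(\al) \cap T(\beta)$ supplied by the hypothesis. Writing $|\la| = c_1 |\al|^3 = c_2 |\beta|^3$, the theorem forces both $c_1$ and $c_2$ to lie in the finite set
$$C := \{1, 4, 8\} \cup \left\{4^{1-1/4^m},\; 2^{2+1/4^m}~:~m \in \bN_{\ge 1}\right\},$$
where the values $1$ and $8$ come from the exceptional options $\la = -\al^3$ and $\la = \al^3/8$ (resp.\ $-\beta^3$ and $\beta^3/8$), and the other entries come from parts (1)--(3) of the trichotomy. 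Dividing the two expressions for $|\la|$ yields $|\al|/|\beta| = (c_2/c_1)^{1/3}$, so after possibly interchanging $\al$ and $\beta$ we may assume $c_2 \ge c_1$.

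The main step is then a finite case analysis over the pairs $(c_1, c_2) \in C \times C$ with $c_2 \ge c_1$. Setting $\alpha_i := \log_2 c_i$, one has $\alpha_i \in \{0, 2, 3\} \cup \{2 - 2/4^m,\; 2 + 1/4^m : m \ge 1\}$, and it suffices to verify that $(\alpha_2 - \alpha_1)/3$ equals the base-$2$ logarithm of some element of $X$. Representative correspondences are: the pair $(\alpha_1, \alpha_2) = (2 - 2/4^r, 2 + 1/4^r)$ yields $2^{1/4^r}$ (the second shape in $X$); a pair $(2 - 2/4^r, 2 + 1/4^s)$ with $r \ne s$ yields $4^{1/(3 \cdot 4^r)} 2^{1/(3 \cdot 4^s)}$ (the seventh shape); pairs of the form $(2 + 1/4^s, 2 + 1/4^r)$ with $s > r$ and $(2 - 2/4^s, 2 - 2/4^r)$ with $s > r$ account for the two shapes involving $1/4^r - 1/4^s$; and pairs with exactly one coordinate in $\{0, 2, 3\}$ produce the remaining shapes $2^{1/(3 \cdot 4^r)}$ and $2^{2/(3 \cdot 4^r)}$, together with the constant $1$. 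No individual computation is difficult; the only obstacle is keeping the bookkeeping organized and exhaustive.

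For the \emph{moreover} statement, $|\al|/|\beta| = 2$ is equivalent to $\alpha_2 - \alpha_1 = 3$. Since $\max \alpha_i = 3$ (attained only by $c_i = 8$) and $\min \alpha_i = 0$ (attained only by $c_i = 1$), the unique pair realizing this difference is $(c_1, c_2) = (1, 8)$. This forces $\la = -\al^3 = \beta^3/8$, whence $\al^3/\beta^3 = -1/8$. Conversely, if $\al^3/\beta^3 = -1/8$ then $|\al^3|/|\beta^3| = |{-1/8}| = 8$ (using $|2| = 1/2$), so $|\al|/|\beta| = 2$, and the common value $\la := -\al^3 = \beta^3/8$ lies in $T(\al) \cap T(\beta)$ (since $f_{\la}(\al) = \infty$ and $f_{\la}(\beta) = 0$, both of which are fixed points of $f_{\la}$). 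The case $|\al|/|\beta| = 1/2$ follows by interchanging the roles of $\al$ and $\beta$.
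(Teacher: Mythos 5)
Your proposal is correct and fills in the argument the paper leaves implicit: the published proof consists only of the sentence ``The proof follows immediately from Theorem \ref{trichotomy}.'' Applying the trichotomy to both $\al$ and $\beta$ with the common $\la$, extracting $|\al|/|\beta| = (c_2/c_1)^{1/3}$, and running the finite case analysis on the possible $\log_2 c_i \in \{0,2,3\}\cup\{2-2/4^m,\,2+1/4^m : m\ge 1\}$ is exactly the intended route, and your argument for the ``moreover'' clause (uniqueness of the pair realizing $\alpha_2-\alpha_1 = 3$) is sound.
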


\begin{proof}
The proof follows immediately from Theorem \ref{trichotomy}.
\end{proof}

A consequence now is that $T(\al)\cap T(\beta)=\emptyset$, for all $\al,\beta$ that `disagree' with our list in Corollary \ref{diftwoadic}. More specifically, we get the following theorem.

\begin{theorem}\label{difempty}
If $\al,\beta\in\overline{\bQ_2}$ satisfy $\gcd\left(6,e(\bQ_2(\frac{\al}{\beta})|\bQ_2)\right)=1$, $\left|\frac{\al}{\beta}\right|\neq 1$ and $\frac{\al^3}{\beta^3}\notin\{-8,-\frac{1}{8}\}$, then we have that $T(\al) \cap T(\beta)=\emptyset$. Moreover, $T(a) \cap T(-2a)=\{-a^3\}$ for all $a\in\bC_2\setminus\{0\}$.
\end{theorem}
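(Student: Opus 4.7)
The plan is to combine Corollary \ref{diftwoadic} with the ramification hypothesis in order to eliminate every candidate value of $|\al|/|\beta|$, and then invoke the explicit $2$-adic description in Corollary \ref{diftwoadic} to close out the remaining case.

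Assume $T(\al)\cap T(\beta)\neq\emptyset$. By Corollary \ref{diftwoadic}, either $|\al|/|\beta|\in X$ or $|\beta|/|\al|\in X$; the hypotheses are invariant under $\al\leftrightarrow\beta$ (the set $\{-8,-\frac{1}{8}\}$ is closed under reciprocals and $\bQ_2(\al/\beta)=\bQ_2(\beta/\al)$), so I may assume the former. Writing $|\al|/|\beta|=2^{t}$, the rational number $t$ lies in $\frac{1}{e}\bZ$ with $e=e(\bQ_2(\al/\beta)\mid\bQ_2)$, so $\gcd(e,6)=1$ forces the denominator of $t$ in lowest terms to be coprime to both $2$ and $3$. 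I then inspect each family in $X$: the exponents $\frac{1}{3\cdot 4^r}$ and $\frac{2}{3\cdot 4^r}$ each retain a factor of $3$ in the denominator; the exponent $\frac{1}{4^r}$ forces $r=0$ (giving $|\al|/|\beta|=2$); and the mixed exponents $\frac{1}{3}(\frac{1}{4^r}-\frac{1}{4^s})$, $\frac{2}{3}(\frac{1}{4^r}-\frac{1}{4^s})$, and $\frac{2}{3\cdot 4^r}+\frac{1}{3\cdot 4^s}$ (with $r\neq s$) all simplify, using $4^k\equiv 1\pmod{3}$ to cancel the $3$, leaving a nontrivial power of $2$ in the denominator. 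The only surviving case is $|\al|/|\beta|=2$, which by the final assertion of Corollary \ref{diftwoadic} gives $\al^3/\beta^3=-\frac{1}{8}$, contradicting the hypothesis.

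For the second assertion, I first check $-a^3\in T(a)\cap T(-2a)$ directly: the point $(a,0)$ is $2$-torsion on $E_{-a^3}$, and the computation $f_{-a^3}(-2a)=\frac{16a^4-16a^4}{4(-8a^3-a^3)}=0$ combined with the fact that $0$ is a fixed point of $f_{-a^3}$ shows that $-2a$ is preperiodic for $f_{-a^3}$. For the reverse inclusion, let $\la\in T(a)\cap T(-2a)$ and apply Theorem \ref{trichotomy} to each of $\al=a$ and $\al=-2a$ (using $|{-2a}|^3=|a|^3/8$). This yields two exceptional lists $\{-a^3,a^3/8\}$ and $\{8a^3,-a^3\}$, whose only common element is $-a^3$, together with two generic families of possible absolute values for $\la$. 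I then rule out every other combination: each of the mixed sub-cases $\la\in\{-a^3,a^3/8\}$ (forcing $|\la|$ to equal a generic value for the $-2a$ orbit, and symmetrically for $\la\in\{8a^3,-a^3\}$), as well as each of the nine pairings across the two generic families, reduces to a linear equation in $\frac{1}{4^m}$ and $\frac{1}{4^n}$ whose right-hand side is too large in absolute value to be achieved for $m,n\ge 1$. Hence $\la=-a^3$ is the only possibility.

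The main obstacle is the bookkeeping in the second assertion: whereas the ramification hypothesis does most of the heavy lifting in the first half, here one must enumerate the cross-combinations of Theorem \ref{trichotomy}'s exceptional and generic cases for the two orbits and verify that each produces an equation with no solution in $m,n\in\bN_{\ge 1}$. Each individual check is elementary, but their combination is the real content of $T(a)\cap T(-2a)=\{-a^3\}$.
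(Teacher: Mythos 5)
Your proposal is correct and follows essentially the same route as the paper: the first assertion combines Corollary \ref{diftwoadic} with the fact that $|\al/\beta|\in 2^{\frac{\bZ}{e}}$ for $e=e(\bQ_2(\frac{\al}{\beta})\,|\,\bQ_2)$ coprime to $6$, eliminating every exponent in $X$ except $\pm 1$, which the hypothesis $\frac{\al^3}{\beta^3}\notin\{-8,-\frac{1}{8}\}$ then rules out (the element $1\in X$ being excluded by $|\al/\beta|\neq 1$); the second assertion matches the two lists of Theorem \ref{trichotomy} for $a$ and $-2a$ to force $\la=-a^3$, and checks $f_{-a^3}(a)=\infty$, $f_{-a^3}(-2a)=0$, just as the paper does. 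Your write-up simply fills in the bookkeeping the paper leaves implicit.
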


\begin{proof}
The proof follows combining the fact that for any $c\in\overline{\bQ_2}$ if $e:=e(\bQ_2(c)|\bQ_2)$, then $|c|\in 2^{\frac{\bZ}{e}}$ with Corollary \ref{diftwoadic}. To see that $T(a)\cap T(-2a)=\{-a^3\}$, note that by Theorem \ref{trichotomy} we get that if $\la\in T(a)\cap T(-2a)$ then $\la=-a^3$, in which case we have $f_{-a^3}(a)=\infty$ and $f_{-a^3}(-2a)=0$.
\end{proof}

\section{A Weierstrass family: more impossible intersections}\label{2}

We use our notation as in Section \ref{1}. Recall that by $|\cdot|$, we mean the $2-$adic absolute defined on $\bC_2$.
Theorem \ref{difempty} raises the question whether we could describe $T(\al)\cap T(\beta)$ for $\al,\beta\in\mathbb{C}_2$ with equal $2-$adic absolute values. In this section, towards partially answering this question, we aim to prove Theorem \ref{mainthm}, which asserts that if we restrict our attention to $\al,\beta\in\overline{\bQ_2}$ satisfying $\frac{\al}{\beta}\in\bQ$, then there are no parameters $\la$ such that both $\al,\beta$ are preperiodic for $f_{\la}$, unless $\frac{\al}{\beta}\in\{-2,-\frac{1}{2}\}$.
 
Before we state the main theorem of this section, a couple of remarks are in order.

\begin{remark}\label{automorphisms}
\em{The isotriviality of $f(z)\in\bC_2(t)(z)$ implies that for all $\al,\la,z\in\bC_2$, we have $f_{\la\al^3}(\al z)=\al f_{\la}(z)$. It easily follows that $\mathcal{O}_{f_{\la\al^3}}(\al)=\al\mathcal{O}_{f_{\la}}(1)$ and $T(\al)=\al^3 T(1)$.}
\end{remark}

\begin{remark}\label{1,c}
\em{From Remark \ref{automorphisms}, we get  
\begin{align*}
\la\in T(\al)\cap T(\beta)\Leftrightarrow\frac{\la}{\beta^3}\in T(1)\cap T\left(\frac{\al}{\beta}\right).
\end{align*}
 In particular, $\#(T(\al)\cap T(\beta))=\#(T(1)\cap T\left(\frac{\al}{\beta}\right))$.}
\end{remark}

For the following we fix an embedding $\overline{\bQ}\hookrightarrow\overline{\bQ}_2$.
For the reader's convenience, we now restate Theorem \ref{mainthm}.

\begin{theorem}\label{mainthmr}
If $\al,\beta\in\overline{\bQ_2}\setminus\{0\}$ are such that $\frac{\al}{\beta}\in\bQ\setminus\{-2,-\frac{1}{2}\}$, then $T(\al)\cap T(\beta)=\emptyset$. Moreover, for all $a\in\overline{\bQ_2}\setminus\{0\}$ we have $T(a) \cap T(-2a)=\{-a^3\}$.
\end{theorem}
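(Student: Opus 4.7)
The plan is to reduce, via Remark \ref{1,c}, to proving $T(1)\cap T(c)=\emptyset$ for every $c\in\bQ\setminus\{-2,-\tfrac{1}{2}\}$ with $c\neq 1$ (the case $c=1$, i.e.\ $\alpha=\beta$, being implicitly excluded). The second assertion, $T(a)\cap T(-2a)=\{-a^3\}$, is already contained in Theorem \ref{difempty} and needs no new work. For the first assertion I would first dispatch $|c|\neq 1$: since $c\in\bQ$ gives ramification index $e(\bQ_2(c)\mid\bQ_2)=1$ (coprime to $6$), and since $c\notin\{-2,-\tfrac{1}{2}\}$ is equivalent to $c^3\notin\{-8,-\tfrac{1}{8}\}$, Theorem \ref{difempty} applies directly.

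The substantive case is $|c|=1$; here $1$ and $c$ have equal absolute values and Theorem \ref{difempty} no longer helps. I would translate via the isotrivialization used in the proof of Theorem \ref{trichotomy}: setting $w=(4\lambda)^{-1/3}$, the condition $\lambda\in T(1)\cap T(c)$ becomes the simultaneous preperiodicity of $w$ and $cw$ for $g(z)=(z^4-2z)/(4z^3+1)$. Since $|c|=1$ forces $|w|=|cw|$, Proposition \ref{trichotomyg} confines $w$ and $cw$ to the same stratum of $\mathrm{PrePer}(g)$. When $|w|<1$ (basin of $0$), Lemma \ref{fourthpower} guarantees a common smallest exponent $m$ with $g^{m+1}(w)=g^{m+1}(cw)=0$; using that $g(z)^3$ depends only on $z^3$, this collapses to a single polynomial system $Q_m(u)=Q_m(c^3 u)=0$ in $u=w^3$, where $Q_m\in\bZ_2[u]$ is produced by iterating $h(u)=u(u-2)^3/(4u+1)^3$ and intersecting with the level set $\{h^m=2\}$. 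The cases $|w|>1$ and $|w|=1$ are handled analogously: $\phi(z)=1/g(1/z)$ takes care of the former, while in the latter one exploits the attracting fixed points $-1,-\xi,-\xi^2$ (Remark \ref{moreattractingpoints}), using that $c\in 1+2\bZ_2$ keeps $cw$ inside the same disc $D(-1,1)$, $D(-\xi,1)$, or $D(-\xi^2,1)$ as $w$; the residual case of periodic $w$ forces $cw$ to be periodic as well (by Remark \ref{periodicpoints}), producing a coprimality question for the periodicity polynomials $g^n(z)-z$.

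Lemma \ref{toempty} packages this reduction so that $T(1)\cap T(c)\neq\emptyset$ becomes equivalent to one of finitely many polynomial pairs $\{Q_m(u),\,Q_m(c^3 u)\}$ (and analogous pairs built near each other fixed point, or for the periodicity polynomials) sharing a root in $\overline{\bQ_2}^{\times}$. The heart of the argument is then to verify coprimality of each such pair for every rational $c\neq 1$, and this is where I expect the main obstacle to lie. My approach would be induction on $m$, combined with an analysis of the Newton polygon of $Q_m$ over $\bZ_2$: once one checks that the valuations of the roots of $Q_m$ share a single explicit value, a rational ratio $c^3$ of two such roots must satisfy $|c^3|=1$, and then comparing leading or constant terms of $Q_m(u)$ and $Q_m(c^3 u)$ should force $c^3=1$. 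This final ``elementary'' coprimality check is the only point at which the rationality of $c$ is genuinely used in an essential way.
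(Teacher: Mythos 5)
Your reduction of the easy parts is fine: the case $|\al|\neq|\be|$ and the identity $T(a)\cap T(-2a)=\{-a^3\}$ do follow from Theorem \ref{difempty}, and Remark \ref{1,c} lets you normalize the ratio. But the substantive step --- showing that no parameter $\la$ makes both points hit $0$, or both hit $\infty$ --- is not actually proved in your sketch, and the method you propose cannot work as stated. Knowing (via the Newton polygon, or Lemma \ref{fourthpower}) that all roots of your $Q_m$ have one common valuation gives nothing here, because you are already in the case $|c|=1$; and comparing leading or constant terms of $Q_m(u)$ and $Q_m(c^3u)$ only constrains the product of the roots, so it cannot detect or exclude a \emph{single} shared root (two distinct polynomials of the same degree with proportional extreme coefficients can easily have one root in common). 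What is needed is that the resultant of the pair is nonzero, and nothing in your outline forces that, nor does your argument ever use the rationality of $c$ beyond $|c|=1$. The paper does this differently and concretely: it clears denominators, writing $\frac{\al}{\be}=\frac{a}{b}$ with $a,b$ coprime \emph{odd integers}, sets up the integral recursions $A_n(a,t),B_n(a,t)$ of Definition \ref{def:recursion}, and proves $\gcd(A_n(a,t),A_n(b,t))=\gcd(B_n(a,t),B_n(b,t))=1$ by reducing modulo an odd prime $p$ dividing exactly one of $a,b$ (Lemma \ref{primeexists}: mod $p$ one family becomes a constant times $t^{(4^n-1)/3}$ while the other has nonzero constant term, and the degrees do not drop), with the single leftover case $\{a,b\}=\{1,-1\}$ handled by reduction mod $3$ (Lemma \ref{1,-1}). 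This auxiliary-prime reduction is exactly the place where integrality of $a,b$ (hence rationality of $\al/\be$) is used, and it has no analogue in your Newton-polygon plan.

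There is also a structural confusion about the case $|w|=1$ (equivalently $|\la|=4|\al|^3$, i.e.\ neither orbit meets $0$ or $\infty$). Lemma \ref{toempty} already eliminates this case outright: since $a,b$ are odd, $|a-b|\le\frac12$, and the contraction computation $|f_\la^n(a)-f_\la^n(b)|=\frac{|a-b|}{2^n}$ contradicts finiteness of the orbits unless $0$ or $\infty$ lies in both orbits. So no analysis near the fixed points $-1,-\xi,-\xi^2$ and no ``periodicity polynomials'' $g^n(z)-z$ are needed --- and your treatment of that stratum is in any case incorrect as written: a preperiodic point of $g$ with $|w|=1$ need not be periodic, nor need it lie in $D(-1,1)\cup D(-\xi,1)\cup D(-\xi^2,1)$ (the residue field is $\overline{\mathbb{F}_2}$, so there are many other unit residue classes, and Remark \ref{periodicpoints} does not force $cw$ to be periodic when $w$ is). Moreover the simultaneous-periodicity coprimality question you would be left with is genuinely hard and is nowhere addressed in your proposal. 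In short: cite Lemma \ref{toempty} to reduce to the two orbit conditions, and then replace the Newton-polygon/coefficient comparison by the mod-$p$ (and mod-$3$) coprimality argument; without that replacement the proof has a real gap.
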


As we have already seen in Theorem \ref{difempty}, when $\al,\beta\in\overline{\bQ_2}$ with $\frac{\al}{\beta}\in\bQ\setminus\{-2,-\frac{1}{2}\}$ and $|\al|\neq |\beta|$, we have $T(\al)\cap T(\beta)=\emptyset$.  Moreover when $a\in\overline{\bQ_2}\setminus\{0\}$, we have $T(a) \cap T(-2a)=\{-a^3\}$. Therefore, to prove Theorem \ref{mainthmr}, it suffices to show that $T(\al)\cap T(\beta)=\emptyset$ when $|\al|=|\beta|$. Our strategy will be to first show in Lemma \ref{toempty} that if $\la\in T(\al)\cap T(\beta)$, then either $0\in\mathcal{O}_{f_{\la}}(\al)\cap\mathcal{O}_{f_{\la}}(\beta)$ or $\infty\in\mathcal{O}_{f_{\la}}(\al)\cap\mathcal{O}_{f_{\la}}(\beta)$. 
Then, after proving the coprimality of certain polynomials in Lemmas \ref{primeexists} and \ref{1,-1}, we will rule out these two cases as well.

\begin{lemma} \label{toempty}
Let $\al,\beta\in\mathbb{C}_2$ with $|\al|=|\beta|$ and $|\al-\beta|\le\frac{|\al|}{2}$. Consider $\la\in T(\al)\cap T(\beta)$. Then either $0\in\mathcal{O}_{f_{\la}}(\al)\cap\mathcal{O}_{f_{\la}}(\beta)$ or $\infty\in\mathcal{O}_{f_{\la}}(\al)\cap\mathcal{O}_{f_{\la}}(\beta)$.
\end{lemma}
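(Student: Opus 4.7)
The plan is to combine the trichotomy (Theorem~\ref{trichotomy}) with the isotriviality of $f$. Since $|\al|=|\beta|$, for each of $\al,\beta$ the possible values of $|\la|/|\al|^3=|\la|/|\beta|^3$ prescribed by Theorem~\ref{trichotomy} form the same set: $\{4\}$ for case~(1), $\{1\}\cup\{4^{1-(1/4)^m}:m\ge 1\}$ for case~(2), and $\{8\}\cup\{2^{2+(1/4)^m}:m\ge 1\}$ for case~(3). These three sets are pairwise disjoint, so any $\la\in T(\al)\cap T(\beta)$ forces $\al$ and $\beta$ into the \emph{same} case of the trichotomy.

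In the non-exceptional parts of cases~(2) and (3), the conclusion $\infty\in\mathcal{O}_{f_{\la}}(\al)\cap\mathcal{O}_{f_{\la}}(\beta)$ (respectively $0\in\mathcal{O}_{f_{\la}}(\al)\cap\mathcal{O}_{f_{\la}}(\beta)$) is immediate from the trichotomy. For the exceptional sub-cases $\la\in\{-\al^3,\al^3/8\}$: comparing absolute values forces $\la$ to be exceptional for $\beta$ of the matching type, since the mixed pairings (e.g.\ $\la=-\al^3=\beta^3/8$) give $|\beta|=2|\al|$, contradicting $|\al|=|\beta|$. Hence $\al^3=\beta^3$, so $\al/\beta$ is a cube root of unity $\zeta$. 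Using $|1-\zeta|=1$ for every primitive cube root of unity $\zeta\in\bC_2$ (a consequence of $|3|=1$), the hypothesis $|\al-\beta|\le|\al|/2$ forces $\zeta=1$, i.e.\ $\al=\beta$, and the conclusion follows from $f_{\la}(\al)\in\{0,\infty\}$.

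The main remaining case is case~(1), where $|\la|=4|\al|^3$ and the orbits of both $\al$ and $\beta$ avoid $\{0,\infty\}$. I would use the isotriviality relation \eqref{goodreduction} and set $w_i=\al_i/(4\la)^{1/3}$ with $(\al_1,\al_2)=(\al,\beta)$; the $w_i$ become distinct preperiodic points of the good-reduction map $g(z)=(z^4-2z)/(4z^3+1)$ satisfying $|w_i|=1$ and $|w_1-w_2|\le 1/2$. Since $|w_1-w_2|<1$, the reductions $\bar w_1=\bar w_2$ agree in $\overline{\mathbb{F}_2}^{\times}$. By Remarks~\ref{moreattractingpoints} and~\ref{periodicpoints}, either both $w_i$ are periodic---and the fact that $\bar g^n(z)-z=z^{4^n}-z$ is separable in characteristic $2$ (its formal derivative is $-1$) gives a unique lift in each residue class, so $w_1=w_2$, a contradiction---or both $w_i$ lie in a common basin $D(-\eta,1)$ of one of the attracting fixed points $-\eta\in\{-1,-\xi,-\xi^2\}$. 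In the latter case Remark~\ref{moreattractingpoints} gives $|w_i+\eta|=|2^{1/3}|^{1/4^{m_i}}>1/2$ for the smallest $m_i$ with $g^{m_i+1}(w_i)=-\eta$, so the ultrametric inequality together with $|w_1-w_2|\le 1/2$ forces $m_1=m_2=m$. A Newton-polygon analysis of the polynomial whose roots are the preimages of $-\eta$ under $g^{m+1}$, combined with the Taylor expansion of $g$ at $-\eta$, finally yields $|w_1-w_2|>1/2$, the desired contradiction.

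The main obstacle is completing this last Newton-polygon estimate in the basin sub-case of case~(1), which genuinely uses both the good reduction of $g$ at $2$ and the precise Taylor expansion of $g$ at each attracting fixed point; everything else reduces to a careful bookkeeping of $2$-adic absolute values.
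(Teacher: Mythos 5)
Your first reduction is sound and matches the paper: since the sets of possible values of $|\la|/|\al|^3$ coming from the three cases of Theorem~\ref{trichotomy} are pairwise disjoint and $|\al|=|\beta|$, any $\la\in T(\al)\cap T(\beta)$ puts $\al$ and $\beta$ in the same case, and in cases (2) and (3) the desired conclusion is immediate from the equivalences in the trichotomy (your extra discussion of the exceptional values $\la\in\{-\al^3,\al^3/8\}$ is harmless but not needed). The problem is your treatment of case (1). First, the dichotomy you invoke --- ``either both $w_i$ are periodic, or both lie in a common basin $D(-\eta,1)$ with $-\eta\in\{-1,-\xi,-\xi^2\}$'' --- is not a dichotomy. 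A preperiodic point of $g$ with $|w|=1$ can lie in any residue disc of $\overline{\mathbb{F}_2}^{\times}$: for instance, a strict $g$-preimage of a periodic point of period $\ge 2$ is strictly preperiodic, is a unit (all preimages of units are units, by good reduction), and its orbit need never meet $D(-1,1)\cup D(-\xi,1)\cup D(-\xi^2,1)$. Remark~\ref{periodicpoints} only constrains periodic points and Remark~\ref{moreattractingpoints} only the three basins, so they do not cover the mixed configurations (e.g.\ one $w_i$ periodic of period $\ge 2$ and the other strictly preperiodic in the same non-basin residue disc), and your argument says nothing about these. Second, even inside a basin the crucial estimate $|w_1-w_2|>1/2$ for two points at the same level $m$ is exactly what needs proving, and you leave it as ``the main obstacle''; so the proposal is incomplete precisely at its crux.

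The paper closes case (1) with a single uniform computation that makes the whole periodic/basin case analysis unnecessary, and this is the idea your plan is missing. Assuming $0,\infty\notin\mathcal{O}_{f_{\la}}(\al)\cup\mathcal{O}_{f_{\la}}(\beta)$ (which your same-case reduction already gives), Theorem~\ref{trichotomy} applied to every point of the two orbits forces $|f^n_{\la}(\al)|=|f^n_{\la}(\beta)|=|\al|$ and $|f^n_{\la}(\al)^3+\la|=|f^n_{\la}(\beta)^3+\la|=4|\al|^3$ for all $n$; then an explicit expansion of $f_{\la}(t)-f_{\la}(u)$ together with the ultrametric inequality and the hypothesis $|\al-\beta|\le|\al|/2$ shows that the dominant term is $8\la^2(u-t)$, whence
\begin{align*}
|f^{n}_{\la}(\al)-f^{n}_{\la}(\beta)|=\frac{|\al-\beta|}{2^{n}}\quad\text{for all }n\in\bN,
\end{align*}
which (for $\al\neq\beta$) produces infinitely many distinct values of $|f^n_{\la}(\al)-f^n_{\la}(\beta)|$, contradicting the finiteness of the two orbits. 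This halving estimate works in every residue disc simultaneously, so no separate treatment of periodic points, basins, or Newton polygons is required; if you want to salvage your residue-disc approach, the estimate you would have to prove in an arbitrary disc is essentially this same computation, at which point your case distinctions become superfluous. (Note also that both your argument and this one implicitly use $\al\neq\beta$, since for $\al=\beta$ case (1) genuinely occurs, e.g.\ $\la=-\al^3/4$ makes $\al$ a fixed point of $f_{\la}$.)
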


\begin{proof}
Let $\la\in T(\al)\cap T(\beta)$ and assume that $0,\infty\notin\mathcal{O}_{f_{\la}}(\al)\cap\mathcal{O}_{f_{\la}}(\beta)$. We want to show that this leads to a contradiction. In light of Theorem \ref{trichotomy} and the fact that $|\al|=|\beta|$ we get that $0\in\mathcal{O}_{f_{\la}}(\al)$ (respectively $\infty\in\mathcal{O}_{f_{\la}}(\al)$)  if and only if $0\in\mathcal{O}_{f_{\la}}(\beta)$ (respectively $\infty\in\mathcal{O}_{f_{\la}}(\beta)$). Our assumption thus implies that $0,\infty\notin\mathcal{O}_{f_{\la}}(\al)\cup\mathcal{O}_{f_{\la}}(\beta)$. For the rest of this proof we will denote $f^{n}_{\la}(\al)$ and $f^n_{\la}(\beta)$ by $t_n$ and $u_n$ respectively.

Since $\la \in T(\al)\cap T(\beta)$, we know that the sets
\begin{align*}
S=\{t_n~:~n\in\bN\}, \text{ and } T=\{u_n~:~n\in\bN\}
\end{align*}
are finite. Therefore, the set $M=\{|t_n-u_n|~:~n\in\bN\}$ is also finite.

We claim that $|t_n-u_n|=\frac{|\al-\beta|}{2^{n}}$ for all $n\in\bN$. This will contradict the fact that $M$ is finite, thus finishing our proof.
We will now prove the claim using induction.
For $n=0$, we have $|t_0-u_0|=|\al-\beta|$. For the inductive step, assume that the statement holds for some $n\ge 0$. Then 
\begin{align*}
|t_{n+1}-u_{n+1}|&=4\left|\frac{t_n^4-8\la t_n}{t_n^3+\la}-\frac{u_n^4-8\la u_n}{u_n^3+\la}\right|\\
&=4\left|\frac{(t_n^4-8\la t_n)(u_n^3+\la)-(u_n^4-8\la u_n)(t_n^3+\la)}{(u_n^3+\la)(t_n^3+\la)}\right|\\
&=\frac{4}{|u_n^3+\la||t_n^3+\la|}\left|t_n^3u_n^3(t_n-u_n)-\la(u_n^4-t_n^4)+8\la t_nu_n(t_n^2-u_n^2)+8\la^2(u_n-t_n)\right|.
\end{align*}
By the induction hypothesis, we know that $|t_n-u_n|=\frac{|\al-\beta|}{2^{n}}\le\frac{|\al|}{2^{n+1}}$. Moreover, since $\la\in T(\al)\cap T(\beta)$ and $0,\infty\notin\mathcal{O}_{f_{\la}}(\al)\cup\mathcal{O}_{f_{\la}}(\beta)$, we also have $\lambda\in T(t_n)\cap T(u_n)$ and $0,\infty\notin\mathcal{O}_{f_{\la}}(t_n)\cup\mathcal{O}_{f_{\la}}(u_n)$ for all $n\in\bN$. Theorem \ref{trichotomy} now yields that for all $n\in\bN$, $|t_n|=|u_n|=\sqrt[3]{\frac{|\la|}{4}}=|\al|$. Therefore, we get that
\begin{align*}
|u_n^3+\la|&=|t_n^3+\la|=4|\al|^3,\\
|t_n+u_n|&=|t_n-u_n+2u_n|\le\frac{|\al|}{2},\text{ and }\\
|t_n^2+u_n^2|&=|(t_n+u_n)^2-2t_nu_n|=\frac{|\al|^2}{2}.
\end{align*}
Hence, 
\begin{align*}
|t_n^3u_n^3(t_n-u_n)|&=|\al|^6|t_n-u_n|,\\
|\la(u_n^4-t_n^4)|&\le|\al|^6|t_n-u_n|,\\
|8\la t_nu_n(t_n^2-u_n^2)|&\le\frac{|\al|^6}{4}|t_n-u_n|,\\
|8\la^2(t_n-u_n)|&=2|\al|^6|t_n-u_n|.
\end{align*}
Thus, using the induction hypothesis, we obtain
\begin{align*}
|t_{n+1}-u_{n+1}|=\frac{1}{2}|t_n-u_n|=\frac{|\al-\beta|}{2^{n+1}}.
\end{align*}
This establishes our claim and concludes the proof.
\end{proof}

\begin{remark}\label{rationalassumption}
\em{We point out here that the condition $\frac{\al}{\beta}\in\bQ$ in Theorem \ref{mainthmr} has been made to ensure that when  $\al,\beta$ have equal $2-$adic absolute values, then $|\al-\beta|\le\frac{|\al|}{2}$ holds. In this case we can apply Lemma \ref{toempty}.}
\end{remark}

To proceed with our proof, we need the following definition.

\begin{definition}\label{def:recursion}
\em{Given $a\in\mathbb{C}\setminus\{0\}$, we write $f_t^n(a)=\frac{A_n(a,t)}{B_n(a,t)}$, where $A_n(a,t),B_n(a,t)\in\mathbb{C}[t]$ are polynomials given recursively as 
\begin{align*}
&A_0(a,t)=a, B_0(a,t)=1.\\
&A_{n+1}(a,t)=A_{n}(a,t)^4-8tA_n(a,t)B_n(a,t)^3.\\
&B_{n+1}(a,t)=4B_n(a,t)A_n(a,t)^3+4tB_n(a,t)^4.
\end{align*}}
\end{definition}

\begin{lemma}\label{coprime}
Let $a\in\mathbb{C}\setminus\{0\}$. We have $\gcdd(A_n(a,t),B_n(a,t))=1$ for all $n\in\bN$. Moreover, $\deg_{t}(A_n(a,t))=\deg_{t}(B_n(a,t))=\frac{4^n-1}{3}$ for all $n\in\bN$.
\end{lemma}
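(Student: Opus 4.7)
The plan is to induct on $n \in \bN$. The base case $n = 0$ is immediate, since $A_0 = a$ and $B_0 = 1$ are coprime and both have degree $0 = \frac{4^0-1}{3}$ in $t$. The engine for both halves of the inductive step is the factored form of the recursion,
\[
A_{n+1} = A_n\bigl(A_n^3 - 8tB_n^3\bigr), \qquad B_{n+1} = 4B_n\bigl(A_n^3 + tB_n^3\bigr),
\]
which I would extract directly from Definition \ref{def:recursion}.

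For the degree claim, assuming $\deg_t A_n = \deg_t B_n = d_n := \frac{4^n-1}{3}$ with nonzero leading $t$-coefficients $\alpha_n$ and $\beta_n$, I would compare leading terms in the recursion to obtain $\alpha_{n+1} = -8\alpha_n\beta_n^3$ and $\beta_{n+1} = 4\beta_n^4$, both appearing in degree $4d_n + 1 = d_{n+1}$. Starting from $\alpha_0 = a \neq 0$ and $\beta_0 = 1$, induction keeps $\alpha_n, \beta_n$ nonzero, yielding the degree formula at step $n+1$.

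For coprimality, the key auxiliary fact is that $t \nmid A_n$ in $\bC[t]$, which follows from specializing the recursion at $t = 0$: one gets $A_{n+1}(a,0) = A_n(a,0)^4$, hence $A_n(a,0) = a^{4^n} \neq 0$. Granted this, suppose an irreducible $p \in \bC[t]$ divides both $A_{n+1}$ and $B_{n+1}$. By the inductive hypothesis $\gcd(A_n, B_n) = 1$, so $p$ cannot divide both, and I would handle the three remaining cases: (i) if $p \mid A_n$ and $p \nmid B_n$, then $p \mid A_n^3 + tB_n^3$ (from $B_{n+1}$) combined with $p \mid A_n$ forces $p \mid tB_n^3$, so $p = t$, contradicting $t \nmid A_n$; (ii) if $p \mid B_n$ and $p \nmid A_n$, then $p \mid A_n^3 - 8tB_n^3$ (from $A_{n+1}$) combined with $p \mid B_n$ forces $p \mid A_n^3$, contradicting $p \nmid A_n$; (iii) if $p$ divides neither, then $p$ divides both $A_n^3 - 8tB_n^3$ and $A_n^3 + tB_n^3$, so it divides their difference $-9tB_n^3$, again forcing $p = t$ and thus $t \mid A_n^3$, contradicting the auxiliary fact.

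There is no real obstacle here; the entire argument is a clean induction whose only subtlety is recognising the factored form of the recursion and the observation $A_n(a,0) = a^{4^n}$, which together prevent $t$ from ever being a common factor of $A_n$ and $B_n$.
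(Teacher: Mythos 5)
Your proof is correct and follows the same route as the paper: an induction on $n$ using the factored recursion, with the key observation that $a\neq 0$ forces $A_n(a,0)\neq 0$ (so $t$ never becomes a common factor). The paper merely states this as an "easy induction" with that observation; your write-up supplies exactly the details it leaves implicit.
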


\begin{proof}
The proof follows from an easy induction. Note that our hypothesis that $a\neq 0$ implies that $A_n(a,0),B_n(a,0)\neq 0$ and thus $t\nmid A_n(a,t),B_n(a,t)$  for all $n\in\bN$.
\end{proof}

Our aim in the sequel will be to prove that when $a,b\in\bZ$, the polynomials $A_n(a,t)$ and $A_n(b,t)$ (respectively $B_n(a,t)$ and $B_n(b,t)$) are coprime. This will in turn imply that $0,\infty\notin\mathcal{O}_{f_{\la}}(a)\cap\mathcal{O}_{f_{\la}}(b)$, which combined with view of Remark \ref{1,c} is what we need in order to conclude the proof of Theorem \ref{mainthmr}, when $\frac{\al}{\beta}=\frac{a}{b}$. To achieve this we will first establish a few key lemmas.

\begin{lemma}\label{primeexists}
For all $a,b\in\bZ$ for which there exists a prime $p\neq 2$ such that $p|a$ and $p\nmid b$, we have 
\begin{align*}
\gcd(A_n(a,t),A_n(b,t))=\gcd(B_n(a,t),B_n(b,t))=1, \text{ for all }n\in\bN.
\end{align*}
\end{lemma}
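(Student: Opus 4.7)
The plan is to reduce the polynomials modulo $p$. Since $A_0(a,t)=a$ and the recursion gives $A_{n+1}=A_n\cdot(A_n^3-8tB_n^3)$, one can factor $A_n(a,t)=a\cdot\tilde A_n(a,t)$ with $\tilde A_n\in\bZ[a,t]$ satisfying $\tilde A_{n+1}=\tilde A_n\cdot(a^3\tilde A_n^3-8tB_n^3)$. A straightforward induction then shows that modulo $p$ (setting $a\equiv 0$) one has
\begin{align*}
\tilde A_n(a,t)&\equiv c_n\, t^{(4^n-1)/3}\pmod p,\\
B_n(a,t)&\equiv 4^{(4^n-1)/3}\, t^{(4^n-1)/3}\pmod p,
\end{align*}
where $c_n\in\bZ\setminus\{0\}$ is, up to sign, a power of $2$. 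Because $p\ne 2$, all coefficients appearing here are units of $\mathbb{F}_p$; in particular both reductions are nonzero \emph{monomials} in $t$, and the leading coefficients of $\tilde A_n(a,t)$ and $B_n(a,t)$ with respect to $t$ remain units mod $p$.

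I would then argue by contradiction. Suppose $g(t)\in\bZ[t]$ is a primitive common factor of $A_n(a,t)$ and $A_n(b,t)$ of positive degree. Since $g$ is primitive and $a\in\bZ$ is a nonzero constant, Gauss's lemma forces $g\mid\tilde A_n(a,t)$ in $\bZ[t]$. Reducing modulo $p$, the formula above makes $\bar g$ a divisor of $c_n\cdot t^{(4^n-1)/3}$ in $\mathbb{F}_p[t]$, so $\bar g=u\cdot t^k$ for some $u\in\mathbb{F}_p^{\times}$ and $0\le k\le(4^n-1)/3$. Moreover, since the leading coefficient in $t$ of $\tilde A_n(a,t)$ reduces to $c_n\ne 0$ in $\mathbb{F}_p$, the reduction preserves degrees, giving $k=\deg g\ge 1$; in particular $t\mid\bar g$.

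To reach the contradiction, I would use that $g\mid A_n(b,t)$ implies $\bar g\mid A_n(\bar b,t)$ in $\mathbb{F}_p[t]$, where $\bar b$ denotes the reduction of $b$. Specializing the recursion at $t=0$ (where $f_0(z)=z/4$) a simple induction yields $A_n(b,0)=b^{4^n}$ and $B_n(b,0)=4^n b^{4^n-1}$; in particular $A_n(\bar b,0)=\bar b^{4^n}\ne 0$ in $\mathbb{F}_p$ because $p\nmid b$. Hence $t\nmid A_n(\bar b,t)$, contradicting $t\mid\bar g\mid A_n(\bar b,t)$. The same scheme handles a hypothetical primitive common factor of $B_n(a,t)$ and $B_n(b,t)$: the displayed formula forces it to reduce to a power of $t$, but $B_n(\bar b,0)=4^n\bar b^{4^n-1}$ is nonzero modulo $p$ (again using $p\ne 2$ and $p\nmid b$).

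The main technical point I expect to require care is the verification that the reductions of $\tilde A_n$ and $B_n$ mod $p$ are genuine monomials in $t$ (not merely polynomials divisible by $t$) with unit leading coefficients in $\mathbb{F}_p$; this is the algebraic fact that drives the entire argument, and it relies essentially on the hypothesis $p\ne 2$ so that no spurious cancellations occur when iterating the recursion with $a\equiv 0\pmod p$. Once that fact is secured, the rest is a clean application of Gauss's lemma combined with a leading-coefficient versus constant-term comparison.
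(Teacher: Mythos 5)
Your proposal is correct and takes essentially the same route as the paper: reduce modulo $p$, divide the constant $a$ out of $A_n(a,t)$ so that both $A_n(a,t)/a$ and $B_n(a,t)$ reduce to monomials in $t$ with coefficients that are (up to sign) powers of $2$, and play this off against $A_n(b,0)=b^{4^n}$ and $B_n(b,0)=4^nb^{4^n-1}$ being units modulo $p$, with degree preservation of the reductions closing the argument. Your explicit Gauss's-lemma contradiction is just a more detailed packaging of the paper's final remark that the degrees of the reductions equal the degrees of the original polynomials.
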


\begin{proof}
We start by noting that given $g(t)\in\bZ[t]$, we denote its reduction modulo $p$ by $\overline{g}(t)\in\mathbb{F}_p[t]$, where $p$ is the same prime as in the statement of the lemma. Moreover, we denote $a_n(a,t)=\frac{A_n(a,t)}{a}$. From the recursion in Definition \ref{def:recursion} we see that $a_n(a,t)\in\bZ[t]$. Our strategy is to first establish that 
\begin{align}\label{eqn:gcd a_n at 2 different values}
\gcd\left(\overline{a_n}(a,t),\overline{A_n}(b,t)\right)=\gcd(\overline{B_n}(a,t),\overline{B_n}(b,t))=1. 
\end{align}
Since $a_n(a,t)\in \bZ[t]$ and $p|a$, we get that $\overline{A_n}(a,t)=0$ for all $n\in\bN$. Therefore, the recursive relations in Definition \ref{def:recursion} yield that
\begin{align*}
\overline{B_{n+1}}(a,t)=4t\overline{B_n}(a,t)^4\text{ and } \overline{B_{1}}(a,t)=4t.
\end{align*}
Thus, we obtain that $\overline{B_{n}}(a,t)=(4t)^{\frac{4^{n}-1}{3}}$.
Additionally, we have the following equalities.
\begin{align}\label{polynomialredrec}
\overline{a_{n+1}}(a,t)=-2(4t)^{4^n}\overline{a_n}(a,t)\text{ and }\overline{a_{1}}(a,t)&=-8t.
\end{align}
From \eqref{polynomialredrec}, it follows that $\overline{a_{n}}(a,t)=(-2)^{n}(4t)^{\frac{4^{n}-1}{3}}$.

Observe now that, to derive \eqref{eqn:gcd a_n at 2 different values}, it suffices to prove that $\overline{A_n}(b,0),\overline{B_n}(b,0)\neq 0$. Notice that $A_n(b,0)=b^{4^n}$ and $B_n(b,0)=4^nb^{4^n-1}$. Since $p\neq 2$ and $p\nmid b$, we have established the equality in \eqref{eqn:gcd a_n at 2 different values}.
Now note that the degrees of the polynomials $a_n(a,t),A_n(b,t),B_n(a,t)$ and $B_n(b,t)$, as computed in Lemma \ref{coprime}, are equal to the degrees of their reductions modulo $p$. This, combined with \eqref{eqn:gcd a_n at 2 different values} finishes the proof of the lemma.
\end{proof}

We will prove another useful lemma of a similar flavor.

\begin{lemma}\label{1,-1}
For all $n\in\bN$, $\gcd(A_n(1,t),A_n(-1,t))=\gcd(B_n(1,t),B_n(-1,t))=1$.
\end{lemma}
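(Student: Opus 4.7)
The plan is to reduce modulo the prime $3$ and exploit the fact that $t+1$ and $t-1$ are distinct (and hence coprime) irreducible polynomials in $\mathbb{F}_3[t]$. By Lemma \ref{coprime} the polynomials $A_n(\pm 1,t), B_n(\pm 1,t) \in \bZ[t]$ all have degree $d_n := (4^n - 1)/3$, and a direct induction on the recursion in Definition \ref{def:recursion} will show that the leading coefficient (in $t$) of each of these polynomials equals $\pm 1$ times a power of $2$; in particular these leading coefficients are units in $\mathbb{F}_3$, so the degree of each polynomial is preserved under reduction modulo $3$.

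The main step will be to prove, by induction on $n \in \bN$, the congruences
\begin{align*}
A_n(1,t) &\equiv B_n(1,t) \equiv (t+1)^{d_n} \pmod{3},\\
A_n(-1,t) &\equiv c_n (t-1)^{d_n} \pmod{3}, \qquad B_n(-1,t) \equiv c_n' (t-1)^{d_n} \pmod{3},
\end{align*}
for some non-zero scalars $c_n, c_n' \in \mathbb{F}_3^*$. Since $-8 \equiv 4 \equiv 1 \pmod{3}$, the recursion simplifies modulo $3$ to $A_{n+1} \equiv A_n^4 + t A_n B_n^3$ and $B_{n+1} \equiv A_n^3 B_n + t B_n^4$. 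For $a = 1$ both right-hand sides telescope to $(t+1)^{4d_n}(1+t) = (t+1)^{d_{n+1}}$ using $d_{n+1} = 4d_n + 1$, so the induction is immediate. For $a = -1$ the same substitution (together with $x^2 = 1$ for $x \in \mathbb{F}_3^*$) yields $A_{n+1}(-1,t) \equiv (t-1)^{4d_n}(1 + tc_n c_n') \pmod{3}$ and $B_{n+1}(-1,t) \equiv (t-1)^{4d_n}(c_n c_n' + t) \pmod{3}$; the key observation is that both of these factor as non-zero scalar multiples of $(t-1)^{d_{n+1}}$ precisely when $c_n c_n' = -1$. The base case gives $c_0 = -1$ and $c_0' = 1$, so $c_0 c_0' = -1$, and a direct check shows the inductive step preserves this invariant. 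I expect this bookkeeping of the scalars $c_n, c_n'$ to be the only real obstacle; once the invariant $c_n c_n' = -1$ is identified, everything else is mechanical.

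Given the congruences above, the coprimality of $(t+1)$ and $(t-1)$ in $\mathbb{F}_3[t]$ immediately implies that the reductions of $A_n(1,t)$ and $A_n(-1,t)$ (respectively of $B_n(1,t)$ and $B_n(-1,t)$) are coprime in $\mathbb{F}_3[t]$. Any common factor of positive degree of $A_n(1,t)$ and $A_n(-1,t)$ in $\bZ[t]$ would, by the degree-preservation from the first paragraph, descend to a common factor of the same positive degree in $\mathbb{F}_3[t]$, a contradiction. The same argument rules out common factors of $B_n(1,t)$ and $B_n(-1,t)$, completing the proof.
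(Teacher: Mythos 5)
Your proposal is correct and follows essentially the same route as the paper: reduce the recursion modulo $3$, show by induction that $A_n(1,t),B_n(1,t)$ reduce to $(1+t)^{(4^n-1)/3}$ while $A_n(-1,t),B_n(-1,t)$ reduce to $\pm(1-t)^{(4^n-1)/3}$, and lift coprimality using that the degrees are preserved under reduction. Your bookkeeping via the invariant $c_nc_n'=-1$ is just a repackaging of the explicit signs the paper computes directly, so there is nothing substantive to add.
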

\begin{proof}
We proceed in a similar fashion to the proof of the previous lemma, just that this time we reduce polynomials modulo 3. Throughout this proof we write $\overline{g}(t)\in\mathbb{F}_3[t]$ for the reduction modulo $3$ of $g(t)\in\mathbb{Z}[t]$. We will show that 
\begin{align}\label{coprimemod3}
\gcd\left(\overline{A_n}(1,t),\overline{A_n}(-1,t)\right)=\gcd\left(\overline{B_n}(1,t),\overline{B_n}(-1,t)\right)=1.
\end{align}
Inductively, we can prove that $\overline{A_{n}}(1,t)=\overline{B_{n}}(1,t)\text{ for all }n\in\bN$.
Therefore, using the recursion for $A_n(1,t)$, we get that $\overline{A_{n+1}}(1,t)=\overline{A_{n}}(1,t)^4(1+t)$.
Thus, 
\begin{align*}
\overline{A_{n}}(1,t)=\overline{B_{n}}(1,t)=(1+t)^{\frac{4^n-1}{3}}.
\end{align*}
Moreover, inductively we can see that
\begin{align*}
&\overline{A_{n}}(-1,t)=(1-t)^{\frac{4^n-1}{3}},\\
&\overline{B_{n}}(-1,t)=-(1-t)^{\frac{4^n-1}{3}}.
\end{align*}
This establishes \eqref{coprimemod3}. Since the degrees of the polynomials $A_n(1,t),A_n(-1,t),B_n(1,t)$ and $B_n(-1,t)$, computed in Lemma \ref{coprime}, are equal to the degrees of their reductions modulo $3$, \eqref{coprimemod3} yields the lemma.
\end{proof}

We are now ready to prove Theorem \ref{mainthmr}. In the following, for a non-zero integer $n$ and a prime $p$, we write $${\rm exp}_p(n)=\max\{e\in\bN~:~p^e\vert n\}.$$

\begin{proof}[Proof of Theorem \ref{mainthmr}.]

Let $\al,\beta\in\overline{\bQ_2}$ be such that $\frac{\al}{\beta}\in\bQ\setminus\{-2,-\frac{1}{2}\}$. We aim to prove that $T(\al)\cap T(\beta)=\emptyset$. In view of Theorem \ref{difempty}, we have that $T(\al)\cap T(\beta)=\emptyset$ if $|\al|\neq|\beta|$, unless $\frac{\al}{\beta}\notin\{-2,-\frac{1}{2}\}$. Assume now $|\al|=|\beta|$.  We may write $\frac{\al}{\beta}=\frac{a}{b}$, where $a,b\in\bZ$ are coprime and satisfy $|a|=|b|=1$. By Remark \ref{1,c} it suffices to prove that $T(a)\cap T(b)=\emptyset$. Assume to the contrary that $\la\in T(a)\cap T(b)$. Now Lemma \ref{toempty} and Remark \ref{rationalassumption} together imply that either $0\in\mathcal{O}_{f_{\la}}(a)\cap\mathcal{O}_{f_{\la}}(b)$ or $\infty\in\mathcal{O}_{f_{\la}}(a)\cap\mathcal{O}_{f_{\la}}(b)$. 

Observe that $0\in\mathcal{O}_{f_{\la}}(a)\cap \mathcal{O}_{f_{\la}}(b)$ if and only if $A_n(a,\la)=A_n(b,\la)=0$ for some $n\in\bN$, and $\infty\in\mathcal{O}_{f_{\la}}(a)\cap \mathcal{O}_{f_{\la}}(b)$ if and only if $B_n(a,\la)=B_n(b,\la)=0$ for some $n\in\bN$. Hence, to prove the theorem it suffices to show that
\begin{align}\label{gcd}
\gcd(A_n(a,t),A_n(b,t))=\gcd(B_n(a,t),B_n(b,t))=1 \text{ for all }n\in\bN.
\end{align}

To this end, we will consider two cases. If $a=-b=1$, on invoking Lemma \ref{1,-1} we see that \eqref{gcd} follows. If on the other hand there exists a prime $p$ such that ${\rm exp}_p(a)\neq{\rm exp}_p(b)$, since $|a|=|b|=1$, we see that $p\neq 2$ and \eqref{gcd} again holds true by Lemma \ref{primeexists}.
\end{proof}

An immediate corollary of Theorem \ref{mainthmr} is the following.

\begin{corollary}
If $\al\in\bQ$ and $\la\in T(\al)$, the orbit of $\al$ under the action of $f_{\la}$ does not contain any rational number other than $\al$ except possibly $0$ and $\infty$.
\end{corollary}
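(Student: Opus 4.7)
The plan is to argue by contradiction using Theorem \ref{mainthmr}. Suppose there exists $\beta\in\mathbb{Q}\cap\mathcal{O}_{f_\la}(\al)$ with $\beta\notin\{\al,0,\infty\}$. Since $\al$ is preperiodic for $f_\la$ and $\beta$ lies in its forward orbit, $\beta$ itself is preperiodic for $f_\la$, so $\la\in T(\beta)$. Combined with the hypothesis $\la\in T(\al)$, this gives $\la\in T(\al)\cap T(\beta)$.

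Since $\al,\beta\in\mathbb{Q}^\times$, we have $\frac{\al}{\beta}\in\mathbb{Q}$. By Theorem \ref{mainthmr}, if $\frac{\al}{\beta}\notin\{-2,-\tfrac{1}{2}\}$ then $T(\al)\cap T(\beta)=\emptyset$, contradicting $\la\in T(\al)\cap T(\beta)$. Hence we must have $\frac{\al}{\beta}\in\{-2,-\tfrac{1}{2}\}$, and in this case the second part of Theorem \ref{mainthmr} pins down $\la$ completely: writing $\beta=-\al/2$ gives $\la=-\beta^3=\al^3/8$, while writing $\beta=-2\al$ gives $\la=-\al^3$.

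The final step is to check that neither scenario actually places $\beta$ in the orbit of $\al$. A direct computation from $f_\la(z)=\frac{z^4-8\la z}{4(z^3+\la)}$ shows:
\begin{align*}
f_{\al^3/8}(\al) &= \frac{\al^4-\al^4}{4(\al^3+\al^3/8)} = 0,\\
f_{-\al^3}(\al) &= \frac{\al^4+8\al^4}{4(\al^3-\al^3)} = \infty.
\end{align*}
Since $0$ and $\infty$ are fixed points of $f_\la$ for every $\la$, the orbit $\mathcal{O}_{f_\la}(\al)$ is either $\{\al,0\}$ or $\{\al,\infty\}$, contradicting the assumption that it contains the rational number $\beta\notin\{\al,0,\infty\}$. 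This completes the argument.

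The only potential subtlety is to remember that Theorem \ref{mainthmr} does not rule out the exceptional ratios $-2$ and $-\tfrac{1}{2}$, so one must handle those two cases explicitly; but once $\la$ is forced to be one of the two explicit values above, the verification that $f_\la(\al)\in\{0,\infty\}$ is immediate and the orbit collapses to a set of size two, leaving no room for a third rational iterate.
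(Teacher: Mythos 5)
Your proof is correct and follows essentially the same route as the paper: deduce $\la\in T(\al)\cap T(\beta)$, invoke Theorem \ref{mainthmr} to force $\frac{\al}{\beta}\in\{-2,-\tfrac12\}$ and hence $\la\in\{-\al^3,\tfrac{\al^3}{8}\}$, and then observe that $f_\la(\al)\in\{\infty,0\}$ so the orbit collapses, contradicting the existence of a rational iterate outside $\{\al,0,\infty\}$.
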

\begin{proof}
Let $\al\in\bQ$ and $\la\in T(\al)$. Assume that for some $n\in\bN$ we have $f^n_{\la}(\al)\in\bQ\setminus\{0,\infty,\al\}$ to derive a contradiction. Observe that $\la\in T(\al)\cap T(f^n_{\la}(\al))$. This observation combined with Theorem \ref{mainthmr} now yields that one of the following is true.

$\bullet$ $f^n_{\la}(\al)=-2\al$, in which case $\la=-\al^3$ and $f_{\la}(\al)=\infty$ contradicting the fact that $f^n_{\la}(\al)\neq\infty$.

$\bullet$ $\al=-2f^n_{\la}(\al)$, in which case $\la=\frac{\al^3}{8}$ and $f_{\la}(\al)=0$ contradicting the fact that $f^n_{\la}(\al)\neq 0$.

In each case we derived a contradiction, yielding our claim.
\end{proof}

We conclude this section with some observations and further questions.

\begin{remark}
\em{Let $h:\overline{\bQ}\to \bR_{\ge 0}$ denote the absolute logarithmic Weil height, as in \cite[Section 3.1]{Silverman07}. As seen from \eqref{goodreduction} we have $\la\in T(1)\Leftrightarrow \frac{1}{(4\la)^{1/3}}\in\mathrm{PrePer}(g)$. Combining this with \cite[Theorem 3.12]{Silverman07} we see that there is a constant $M>0$ such that $h(\la)<M$ for all $\la\in T(1)$. Moreover, by Remark \ref{automorphisms} we have $T(\al)\cap T(\beta)\neq\emptyset$ if and only if there exist $\la,\mu\in T(1)$ such that $\al^3\la=\beta^3\mu$. Therefore, $3h(\frac{\al}{\beta})=h(\frac{\mu}{\la})\le 2M$. Hence, there is an absolute constant $C>0$ such that $h\left(\frac{\al}{\beta}\right)<C$ for all $\al,\beta\in\overline{\bQ}$  that satisfy $T(\al)\cap T(\beta)\neq\emptyset$. In particular there exist only finitely many $\frac{\al}{\beta}$ of bounded degree such that $T(\al)\cap T(\beta)\neq\emptyset$. In Theorem \ref{mainthmr}, we show that if $\frac{\al}{\beta}\in\bQ$, then $T(\al)\cap T(\beta)\neq\emptyset$ if and only if $\frac{\al}{\beta}\in\{-2,-\frac{1}{2}\}$. This raises the following natural question: Fix $d\in\bZ$, what is the best upper bound (depending on $d$) on the number of $\frac{\al}{\beta}$ of degree at most $d$ such that $T(\al)\cap T(\beta)\neq\emptyset$?}
\end{remark}

Finally, it would be interesting to know whether there exist $\al,\beta\in\overline{\bQ}$ such that $2\le\#(T(\al)\cap T(\beta))<+\infty$. We note here that by Remark \ref{automorphisms} if $\al^3=\beta^3$, we have $T(\al)\cap T(\beta)=T(\al)=T(\beta)$, which by Remark \ref{allareassumed} is an infinite set. 

\section{The Legendre family of elliptic curves}\label{3}
For this section, let $E_{\la}:y^2=x(x-1)(x-\la)$ be the Legendre family of elliptic curves parametrized by $\la\in\mathbb{C}_2\setminus\{0,1\}$. 
The Latt\`{e}s map induced by the multiplication by $2$ map on $E_{\la}$, is given as 
\begin{align*}
f_{\la}(z)=\frac{(z^2-\la)^2}{4z(z-1)(z-\la)}.
\end{align*}
Let $\al\in\bC_2$. We define $T(\al)$ as follows.
\begin{align*}
T(\al)&=\{\la\in\mathbb{C}_2\setminus\{0,1\}~:~(\al,\sqrt{\al(\al-1)(\al-\la)})\in(E_{\la})_{tors}\}\\
&=\{\la\in\mathbb{C}_2\setminus\{0,1\}~:~\al\text{ is preperiodic for }f_{\la}\}.
\end{align*}

First, we prove the following easy proposition.

\begin{proposition}\label{reciprocal}
For all $n\in\bN$ and $\la\in\mathbb{C}$, we have $f^n_{\frac{1}{\la}}(\frac{1}{x})=\frac{1}{\la}f^n_{\la}(x)$. In particular, $\la\in T(x)$ if and only if $\frac{1}{\la}\in T(\frac{1}{x})$.
\end{proposition}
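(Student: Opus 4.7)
The plan is to proceed by induction on $n$. The base case $n=1$ is a routine direct verification: substituting $1/x$ into the defining formula for $f_{1/\la}$ and clearing nested fractions, the numerator collapses to $(x^2-\la)^2/(x^4\la^2)$ and the denominator to $4(x-1)(x-\la)/(x^3\la)$ (using $(1-x)(\la-x)=(x-1)(x-\la)$), producing
\[
f_{1/\la}(1/x)\;=\;\frac{(x^2-\la)^2}{4\la x(x-1)(x-\la)}\;=\;\frac{1}{\la}\,f_\la(x).
\]

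For the inductive step, assume the identity holds at stage $n$ and set $y=f^n_\la(x)$. Then $f^{n+1}_{1/\la}(1/x)=f_{1/\la}(y/\la)$, so it suffices to verify the auxiliary identity $f_{1/\la}(y/\la)=\frac{1}{\la}f_\la(y)$, which is an entirely analogous direct calculation (the numerator becomes $(y^2-\la)^2/\la^4$ and the denominator becomes $4y(y-\la)(y-1)/\la^3$). This closes the induction. Conceptually, both identities are instances of a single M\"obius conjugacy $f_\la\circ M=M\circ f_{1/\la}$ with $M(z)=\la z$, reflecting the $\overline{\bQ_2}$-isomorphism between the elliptic curves $E_\la$ and $E_{1/\la}$; one could equally well begin from this conjugacy, deduce $f^n_\la(\la z)=\la\cdot f^n_{1/\la}(z)$ by induction, specialize at $z=1/x$, and then invoke the symmetry $f_\la(\la/x)=f_\la(x)$ (itself a direct computation) to replace $f^n_\la(\la/x)$ by $f^n_\la(x)$.

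The ``in particular'' clause follows at once: since $\la\ne 0$, multiplication by $1/\la$ is a bijection of $\bC_2$, so the forward orbit of $1/x$ under $f_{1/\la}$, namely $\{1/x\}\cup\{\frac{1}{\la}f^n_\la(x):n\ge 1\}$, is finite if and only if the forward orbit of $x$ under $f_\la$ is finite, i.e., $\la\in T(x)$ if and only if $1/\la\in T(1/x)$. The only obstacle is careful bookkeeping of signs and denominators in the two direct calculations; no deeper ingredient is needed.
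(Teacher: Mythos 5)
Your proof is correct and follows essentially the same route as the paper: the base case $f_{1/\la}(1/x)=\tfrac{1}{\la}f_\la(x)$ together with the auxiliary scaling identity $f_{1/\la}(z/\la)=\tfrac{1}{\la}f_\la(z)$ used to close the induction, and your concluding remark about the conjugacy $M(z)=\la z$ (equivalently the isomorphism $E_\la\cong E_{1/\la}$) matches the paper's alternative argument for the ``in particular'' clause.
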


\begin{proof}
For $n=1$, we have $f_{\frac{1}{\la}}(\frac{1}{x})=\frac{1}{\la}f_{\la}(x)$.
Using the easily verifiable fact $f_{\frac{1}{\la}}(\frac{z}{\la})=\frac{1}{\la}f_{\la}(z)$, we get the result inductively. Alternatively, one can see that $\la\in T(x)$ if and only if $\frac{1}{\la}\in T(\frac{1}{x})$ by noting that the affine map $(x,y)\to (\frac{1}{x},\frac{y}{x^2\sqrt{\la}})$ extends to an isomorphism between the elliptic curves $E_{\la}$ and $E_{\frac{1}{\la}}$.
\end{proof}

The following theorem is a restatement of \cite[Theorem 3]{Stoll14}. Here we provide a different, shorter proof.

\begin{theorem}\label{stoll}
Let $\al\in\mathbb{C}_2\setminus\{0,1\}$ and $\la\in T(\al)\setminus\{\al\}$. If $|\al|\le 1$, then $|\al^2-\la|<1$. If $|\al|>1$, then $|\la|>1$.
\end{theorem}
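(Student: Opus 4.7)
The plan is to exploit that $\infty$ is a $2$-adically attracting fixed point of $f_\la$ with multiplier $4$ (note $|4|=1/4<1$), so that any point whose orbit escapes $2$-adically to $\infty$ cannot be preperiodic. The whole argument rests on a single elementary estimate that I would establish first by direct application of the ultrametric inequality: \emph{if $|z|>\max\{1,|\la|\}$, then $|f_\la(z)|=4|z|$.} Indeed, under this hypothesis one has $|z^2-\la|=|z|^2$, $|z-1|=|z|$ and $|z-\la|=|z|$, so the numerator of $f_\la(z)$ has absolute value $|z|^4$ and the denominator has absolute value $|z|^3/4$. Iterating, $|f_\la^n(z)|=4^n|z|\to\infty$, and such a $z$ cannot be preperiodic.

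The first assertion ($|\al|>1\Rightarrow |\la|>1$) then follows immediately by contrapositive: if $|\al|>1$ and $|\la|\le 1$, then $\al$ itself satisfies the hypothesis of the key estimate, so its orbit escapes to infinity, contradicting $\la\in T(\al)$.

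For the second assertion ($|\al|\le 1\Rightarrow |\al^2-\la|<1$), I would again argue by contrapositive, assuming $|\al|\le 1$, $\la\ne\al$ and $|\al^2-\la|\ge 1$. The strategy is to compute $|f_\la(\al)|$ by hand and verify that it already lies in the regime of the key estimate, so that one further iteration forces the orbit to escape. There are two subcases. If $|\la|>1$, then $|\al^2-\la|=|\al-\la|=|\la|$ while $|\al|,|\al-1|\le 1$ (using $\al\ne 0,1$), and a short calculation gives $|f_\la(\al)|=4|\la|/(|\al||\al-1|)\ge 4|\la|>\max\{1,|\la|\}$. If instead $|\la|\le 1$, then $|\al^2-\la|=1$ and $|\al|,|\al-1|,|\al-\la|\le 1$ are all nonzero (using $\al\ne 0,1$ and $\la\ne\al$), so $|f_\la(\al)|=4/(|\al||\al-1||\al-\la|)\ge 4>\max\{1,|\la|\}$. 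In either case, the key estimate applied to $f_\la(\al)$ shows that the orbit escapes to $\infty$, contradicting $\la\in T(\al)$.

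The only delicate point in the argument is to keep track of which excluded values ($0$, $1$, $\al$) guarantee that the relevant factors in the denominator of $f_\la(\al)$ are nonzero; this is precisely the role played by the hypotheses $\al\ne 0,1$ and $\la\ne\al$. The approach is noticeably shorter than Stoll's treatment via division polynomials because it bypasses $n$-fold recursion entirely, using only the fact that $\infty$ lies in the residue disk of every orbit point with $|z|>\max\{1,|\la|\}$.
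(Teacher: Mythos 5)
Your proposal is correct and follows essentially the same approach as the paper: in each case one computes $|f_{\la}(\al)|$ directly and observes that the orbit then grows $2$-adically by a factor of $4$ at every step, so it escapes to $\infty$ and $\al$ cannot be preperiodic. The only cosmetic differences are that you isolate the growth step as an explicit lemma (with the hypothesis $|z|>\max\{1,|\la|\}$, making the paper's brief induction precise) and you handle the case $|\al|>1$ directly by the same estimate, whereas the paper deduces it from the case $|\al|\le 1$ via the involution $\la\mapsto\frac{1}{\la}$ of Proposition \ref{reciprocal}.
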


\begin{proof}
Consider $\al\in\mathbb{C}_2$ with $|\al|\le 1$ and let $\la\in T(\al)\setminus\{\al\}$. First we will show that $|\la|\le 1$. Assume, so as to derive a contradiction, that $|\la|>1$. Then, we have
\begin{align*}
|f_{\la}(\al)|=\frac{4|\la|^2}{|\al||\al-1||\la|}\ge 4|\la|.
\end{align*}
Inductively, we get $|f^{n+1}_{\la}(\al)|=4|f^n_{\la}(\al)|$ for all $n\ge 1$. Since $\la\in T(\al)$, this implies $f_{\la}(\al)=\infty$ which in turn contradicts the fact that $\la\neq\al$.
Therefore, we must have $|\la|\le 1$. To prove $|\al^2-\la|<1$, let us assume the opposite and see what happens. Using the fact $|\la|\le 1$, we have 
\begin{align*}
|f_{\la}(\al)|=\frac{4|\al^2-\la|^2}{|\al||\al-1||\al-\la|}\ge 4.
\end{align*}
Again, inductively we get $|f^{n+1}_{\la}(\al)|=4|f^n_{\la}(\al)|$ for all $n\ge 1$. Since $\la\in T(\al)$, this yields $f_{\la}(\al)=\infty$, contradicting the fact that $\la\neq\al$. Hence $|\al^2-\la|<1$.

The second part of the statement now follows by the first part and Proposition \ref{reciprocal}.
\end{proof}

Therefore, exactly as in \cite[Corollary 4]{Stoll14}, denoting by $\rho$ the natural reduction map $\mathbb{P}^1(\bC_2)\to\mathbb{P}^1(\overline{\mathbb{F}_2})$, we get that

\begin{corollary}
If $\al,\beta\in\mathbb{C}_2\setminus\{0,1\}$ such that $\rho(\al)\neq\rho(\beta)$, then $T(\al)\cap T(\beta)\subset\{\al,\beta\}$.
\end{corollary}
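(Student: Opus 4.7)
The plan is to argue by contradiction and split into cases based on the reductions of $\alpha$ and $\beta$. Suppose $\lambda\in T(\alpha)\cap T(\beta)$ with $\lambda\notin\{\alpha,\beta\}$; I aim to show that this forces $\rho(\alpha)=\rho(\beta)$.

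First I would dispose of the easy cases. If both $|\alpha|>1$ and $|\beta|>1$, then $\rho(\alpha)=\rho(\beta)=\infty$, giving the contradiction directly. Next, if one has absolute value $>1$ and the other $\le 1$, say $|\alpha|\le 1$ and $|\beta|>1$, I would invoke the two halves of Theorem \ref{stoll}: applied to $\alpha$ it gives $|\alpha^2-\lambda|<1$, and since $|\alpha^2|\le 1$ the ultrametric inequality forces $|\lambda|\le 1$; applied to $\beta$ it gives $|\lambda|>1$. These contradict each other, so this mixed case cannot occur.

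The remaining case is $|\alpha|,|\beta|\le 1$. Here Theorem \ref{stoll} gives $|\alpha^2-\lambda|<1$ and $|\beta^2-\lambda|<1$, and the ultrametric inequality yields
\begin{equation*}
|\alpha^2-\beta^2|\le\max\{|\alpha^2-\lambda|,|\beta^2-\lambda|\}<1.
\end{equation*}
The key point is now that the residue field has characteristic $2$, so the Frobenius $x\mapsto x^2$ is injective on $\overline{\mathbb{F}_2}$. Hence $\rho(\alpha)^2=\rho(\beta)^2$ forces $\rho(\alpha)=\rho(\beta)$, contradicting the hypothesis.

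The only real content is the characteristic-$2$ observation at the end; the rest is a straightforward case split feeding directly off Theorem \ref{stoll}. The plan is short because Theorem \ref{stoll} already packages the hard dynamical information, so I do not expect any genuine obstacle here.
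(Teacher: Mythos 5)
Your proposal is correct and is essentially the same argument the paper intends: the paper deduces the corollary from Theorem \ref{stoll} ``exactly as in Stoll's Corollary 4,'' namely that a common parameter $\la\notin\{\al,\beta\}$ forces either both reductions to be $\infty$, or $\la$ to reduce to both $\rho(\al)^2$ and $\rho(\beta)^2$, whence $\rho(\al)=\rho(\beta)$ by injectivity of Frobenius in characteristic $2$ (with the mixed case excluded since the two halves of Theorem \ref{stoll} give incompatible bounds on $|\la|$). No gaps.
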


For examples of $\al,\beta$ such that $T(\al)\cap T(\beta)$ is empty or has exactly one or two elements, we refer the reader to \cite[Example 5]{Stoll14}.

Now we aim to strengthen this result. In particular we provide an effective description of $T(\al)\cap T(\beta)$ even in some cases when $\rho(\al)=\rho(\beta)$. To this end, we have the following.

\begin{theorem}\label{2empty}
If $\al\in\mathbb{C}_2$ satisfies $|\al|\le \frac{1}{4}$, then $T(2)\cap T(\al)\subset\{\al\}$. Moreover, if $|\al|<\frac{1}{4}$, then $T(2)\cap T(\al)=\emptyset$.
\end{theorem}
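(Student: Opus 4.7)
The plan is to combine Theorem \ref{stoll} with direct $2$-adic computations of iterates of $f_\la$ at $2$ and at $\al$. The key principle, which I will call \emph{trapping}, is the following observation: if $\la\in T(z)$ and $|\la|\le 1$, then the entire forward orbit $\mathcal{O}_{f_\la}(z)$ lies in the closed unit disk. Indeed, if some iterate $w=f_\la^n(z)$ satisfied $|w|>1$, then $\la\in T(w)$ and $\la\ne w$ (since $|\la|\le 1<|w|$), so Theorem \ref{stoll} would force $|\la|>1$, a contradiction. Thus once $|\la|\le 1$ is known, producing any iterate of $2$ or $\al$ under $f_\la$ whose absolute value exceeds $1$ suffices to reach a contradiction.

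First I dispose of the possibility $\la=2$. Using $|\al|\le 1/4$ one finds $|\al^2-2|=1/2$, $|\al-1|=1$, and $|\al-2|=1/2$, hence $|f_2(\al)|=2/|\al|\ge 8$. Since any $z$ with $|z|>1$ satisfies $|f_2(z)|=4|z|$ by a similar direct estimate, the orbit of $\al$ under $f_2$ escapes to infinity, so $\al$ is not preperiodic for $f_2$ and $2\notin T(\al)$.

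Now suppose $\la\in T(2)\cap T(\al)$ with $\la\notin\{2,\al\}$. Theorem \ref{stoll} applied to $2$ and to $\al$ gives $|\la-4|<1$ and $|\la-\al^2|<1$, and combined with $|4|=1/4$ and $|\al^2|\le 1/16$ these yield $|\la|<1$, so $|\la|\le 1/2$ and trapping is available. I split on $|\la|$. If $1/4<|\la|\le 1/2$, then $|4-\la|=|\la|$ and $|2-\la|\le 1/2$, giving $|f_\la(2)|=8|\la|^2/|2-\la|>1$, which contradicts trapping. If $|\la|=1/4$, then $|\al^2-\la|=1/4$ and $|\al-\la|\le 1/4$, which together with $|\al-1|=1$ yield $|f_\la(\al)|\ge 4>1$, again contradicting trapping. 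Finally, if $|\la|<1/4$, a direct computation gives $|f_\la(2)|=1$ with $|f_\la(2)-1|=16|\la|^2<1$, and then $|f_\la^2(2)|=1/(4|\la|^2)>4$, the last contradiction. These cases exhaust $|\la|\le 1/2$, so no such $\la$ exists; combined with the disposal of $\la=2$ this proves $T(2)\cap T(\al)\subset\{\al\}$.

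For the second assertion, assume $|\al|<1/4$ and rule out $\la=\al$; equivalently, I must show $2$ is not preperiodic for $f_\al$. The same calculation gives $|f_\al(2)|=1$ and $|f_\al(2)-1|=16|\al|^2<1$, whence $|f_\al^2(2)|=1/(4|\al|^2)>4$, after which iterates of $f_\al$ grow by a factor of $4$ at each step, so $2$ has unbounded orbit under $f_\al$. Hence $\al\notin T(2)$, and $T(2)\cap T(\al)=\emptyset$. The most delicate point in the argument is the case $|\la|=1/4$: there the orbit of $2$ alone can remain in the closed unit disk (it sits on the critical threshold), so trapping cannot be applied to that orbit, and one must invoke the hypothesis $\la\in T(\al)$ and examine $f_\la(\al)$ to close the argument.
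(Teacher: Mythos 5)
Your overall strategy --- iterating Theorem \ref{stoll} along orbits together with a case analysis on $|\la|$ --- is the same as the paper's, and most of your $2$-adic computations are correct. However, there is a genuine gap at the step ``these yield $|\la|<1$, so $|\la|\le \frac{1}{2}$''. The value group of the absolute value on $\bC_2$ is $2^{\bQ}$, which is dense in $\bR_{>0}$, so $|\la|<1$ does \emph{not} force $|\la|\le\frac{1}{2}$ (e.g.\ $|\la|=2^{-1/3}$ is possible). Consequently your three cases $\frac{1}{4}<|\la|\le\frac{1}{2}$, $|\la|=\frac{1}{4}$, $|\la|<\frac{1}{4}$ do not exhaust the range $|\la|<1$, and the case $\frac{1}{2}<|\la|<1$ is never treated; note your estimate $|2-\la|\le\frac{1}{2}$ fails there. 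The omission is easily repaired inside your own framework: for $\frac{1}{2}<|\la|<1$ one has $|2-\la|=|4-\la|=|\la|$, hence $|f_{\la}(2)|=8|\la|>4>1$, and your trapping argument gives the contradiction. This is precisely the step ``$|\la|\le\frac{1}{2}$'' in the paper's Claim \ref{lainT(2)}, which your write-up silently skips.

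A smaller caveat: the trapping principle is stated too strongly. If the orbit of $z$ meets a pole of $f_{\la}$ (for instance $z=\la$), then $z$ is preperiodic because the orbit hits the fixed point $\infty$, yet the orbit does not stay in the closed unit disk; Theorem \ref{stoll} applies only to finite points of $\bC_2\setminus\{0,1\}$ distinct from $\la$. In each of your applications the iterate produced is visibly finite (the relevant denominators have nonzero absolute value), so the uses are sound, but the principle should be phrased as: producing a \emph{finite} iterate $w\neq\la$ with $|w|>1$ contradicts $|\la|\le 1$. Apart from these points the argument checks out, and where it departs from the paper --- handling $|\la|<\frac{1}{4}$ and the case $\la=\al$ by following the orbit of $2$ for two steps and letting it escape, instead of the paper's appeal to the first part of Theorem \ref{stoll} and to the Claim that every $\la\in T(2)\setminus\{2\}$ satisfies $|\la|=\frac{1}{4}$ --- the computations are correct and yield the same conclusion.
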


\begin{proof}
Let $\al\in\bC_2$ be such that $|\al|\le\frac{1}{4}$ and $\la\in T(2)\cap T(\al)\setminus\{2,\al\}$. In view of Theorem \ref{stoll}, we know that $|\la|<1$. We claim, in fact, that the following is true.
\begin{claim}\label{lainT(2)}
If $\la\in T(2)\setminus\{2\}$, then $|\la|=\frac{1}{4}$.
\end{claim}
\begin{proof}
 Using an argument by contradiction, we will first show that $|\la|\ge\frac{1}{4}$. Assume that $|\la|< \frac{1}{4}$. Then $|f_{\la}(2)|=\frac{8|4-\la|^2}{|2-\la|}=1$ and therefore $|(f_{\la}(2))^2-\la|=1$. However, since $\la\in T(f_{\la}(2))\setminus\{f_{\la}(2)\}$, this contradicts Theorem \ref{stoll}.

 A similar argument also shows that $|\la|\le \frac{1}{2}$. Indeed, if $|\la|>\frac{1}{2}$, then we have $|f_{\la}(2)|=8|\la|>4$, which contradicts Theorem \ref{stoll} since $\la\in T(f_{\la}(2))\setminus\{f_{\la}(2)\}$ and $|\la|<1$.

Now we know that $\frac{1}{4}\leq |\la| \leq \frac{1}{2}$. Assume that $|\la|> \frac{1}{4}$, so as to derive a contradiction and establish the claim. Since $|\la|\le\frac{1}{2}$, we have $|f_{\la}(2)|=\frac{8|\la|^2}{|2-\la|}\ge 16|\la|^2>1$, which combined with the fact that $\la\in T(f_{\la}(2))\setminus\{f_{\la}(2)\}$ contradicts Theorem \ref{stoll}. This yields the claim.
\end{proof}
To finish the proof of the theorem, notice that Claim \ref{lainT(2)} yields that $|\la|=\frac{1}{4}$. This, combined with our assumption that $|\al|\le\frac{1}{4}$, implies $|f_{\la}(\al)|=\frac{4|\la|^2}{|\al||\al-\la|}\ge 4$, which by Theorem \ref{stoll} contradicts the fact that $\la\in T(f_{\la}(\al))\setminus\{f_{\la}(\al)\}$.
Therefore, we obtain that $T(2)\cap T(\al)\subset\{2,\al\}$. We can easily see that $2\notin T(\al)$, since $|\al|\le\frac{1}{4}$. Thus, in fact we get that $T(2)\cap T(\al)\subset\{\al\}$. If in particular $|\al|<\frac{1}{4}$, then by Claim \ref{lainT(2)} we have $T(2)\cap T(\al)=\emptyset$.
\end{proof}

Combining now Proposition \ref{reciprocal} and Theorem \ref{2empty}, we get the following.

\begin{corollary}\label{1/2empty}
If $\beta\in\mathbb{C}_2$ satisfies $|\beta|\ge4$, then $T(\frac{1}{2})\cap T(\beta)\subset\{\beta\}$. Moreover, if $|\beta|>4$, then $T(\frac{1}{2})\cap T(\beta)=\emptyset$.
\end{corollary}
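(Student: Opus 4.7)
The plan is to deduce this corollary by applying the involution $\la \mapsto 1/\la$ on the parameter space, using Proposition \ref{reciprocal} to transfer the problem from the region $|\beta| \ge 4$ to the region $|\al| \le 1/4$, where Theorem \ref{2empty} already gives the desired conclusion. The key observation is that the substitution $\al = 1/\beta$ maps the hypothesis $|\beta|\ge 4$ precisely onto the hypothesis $|\al|\le 1/4$, and it maps $\beta = 1/2$ onto $\al = 2$; so Theorem \ref{2empty} applied to $(2,\al)$ will translate directly into the statement for $(1/2,\beta)$.

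Concretely, I would start by fixing $\beta\in\mathbb{C}_2$ with $|\beta|\ge 4$ and setting $\al := 1/\beta$, so that $|\al|\le 1/4$. Suppose for contradiction (or just to chase inclusions) that $\la \in T(1/2)\cap T(\beta)$. In particular $\la\neq 0$ (since the parameter space excludes $0$), so $1/\la$ is defined. Applying Proposition \ref{reciprocal} twice — once to the pair $(x,\la) = (1/2,\la)$ and once to $(\beta,\la)$ — gives
\begin{equation*}
\frac{1}{\la}\in T(2)\cap T\!\left(\tfrac{1}{\beta}\right) \;=\; T(2)\cap T(\al).
\end{equation*}

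Now Theorem \ref{2empty} applied to $\al$ (which satisfies $|\al|\le 1/4$) tells us that $T(2)\cap T(\al)\subset\{\al\}$, so $1/\la = \al = 1/\beta$, which forces $\la = \beta$. This establishes $T(1/2)\cap T(\beta) \subset \{\beta\}$. For the second assertion, if we instead assume $|\beta|>4$ then $|\al|<1/4$, and the stronger half of Theorem \ref{2empty} gives $T(2)\cap T(\al) = \emptyset$; transporting back via Proposition \ref{reciprocal} yields $T(1/2)\cap T(\beta) = \emptyset$.

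No genuine obstacle is expected here: the entire content is packaged in the two cited results, and the only thing to verify is that the reciprocal map is well defined on the parameter sets involved (i.e.\ that $0,1\notin T(1/2)\cap T(\beta)$, which is automatic since those values are excluded from the definition of $T$), and that the equivalence of Proposition \ref{reciprocal} applies verbatim to both base points $1/2$ and $\beta$.
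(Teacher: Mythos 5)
Your proof is correct and takes exactly the approach the paper intends: the paper simply states the corollary follows by ``Combining now Proposition \ref{reciprocal} and Theorem \ref{2empty}'' and gives no further detail, and your argument is precisely that combination — applying the reciprocal involution $\la\mapsto 1/\la$ to transport $T(1/2)\cap T(\beta)$ to $T(2)\cap T(1/\beta)$ and then invoking Theorem \ref{2empty} with $\al=1/\beta$, $|\al|\le 1/4$ (resp.\ $<1/4$). Your side remark that $0,1$ are excluded from all $T$-sets, so the reciprocal is well defined and lands back in the parameter space, is a correct and worthwhile sanity check.
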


\section{Other families}\label{4}
We will now consider
\begin{align*}
f_{\la}(z)=\frac{z^d+\la}{pz},
\end{align*}
where $d\ge 2$ and $p\in\bZ$ prime. 
Our method will give results of flavor similar to the results in Section \ref{3} for this family of rational maps. However, we find it worthwhile to mention that the above family is not a Latt\`es family. To see this note that for all $\la\in\bC$ the maps $f_{\la}$ have an attracting fixed point in the topology induced by the standard complex absolute value; when $d>2$ we have that $\infty$ is a fixed critical point and when $d=2$ the points $\pm\sqrt{\frac{\la}{p-1}}$ are fixed points with multiplier $\frac{2-p}{p}$. On the other hand, for Latt\`es maps all periodic points are repelling and dense in $\mathbb{P}^1(\bC)$, as illustrated by the fact that the Julia set of a Latt\`es map is the entire Riemann sphere \cite[Theorem 1.43]{Silverman07}. For a definition of the Julia set of a rational map we refer the reader to \cite{Silverman07}.

In this section $|\cdot|_p$ will denote the $p-$adic absolute value on $\bC_p$, with $|p|_p=\frac{1}{p}$. We write $T(\al)=\{\la\in\mathbb{C}_p~:~ \al\text{ is preperiodic for } f_{\la}\}$. We note that $0$ is a persistently preperiodic point for the family of rational maps $f_{\la}$ where $\la\in\bC_p$.
Therefore, in the following we consider $\al\neq 0$.

\begin{theorem}
Let $\al\in\mathbb{C}_p\setminus\{0\}$ with $|\al|_p\le1$ and let $\la\in T(\al)$. Then $|\al^d+\la|_p<1$. If on the other hand we have $|\al|_p>1$, then $|\la|_p>1$.
\end{theorem}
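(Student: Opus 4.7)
My plan is to mimic the argument of Theorem~\ref{stoll}: assume the conclusion fails and show that the $f_\la$-orbit of $\al$ escapes to infinity in the $p$-adic absolute value, contradicting $\la\in T(\al)$. The engine of the argument is the following growth lemma, which I would state and prove first. If $z\in\bC_p$ satisfies $|z|_p>1$ and $|z|_p^d>|\la|_p$, then by the ultrametric inequality $|z^d+\la|_p=|z|_p^d$, so
\[
|f_\la(z)|_p \;=\; p\,|z|_p^{d-1} \;\ge\; p\,|z|_p,
\]
and both hypotheses on $z$ transfer to $f_\la(z)$ (the first because $p|z|_p>1$, the second because $|f_\la(z)|_p^d\ge p^d|z|_p^d>|\la|_p$). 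Iterating therefore yields $|f_\la^n(z)|_p\ge p^n|z|_p\to\infty$, which rules out $z$ being preperiodic for $f_\la$.

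I would first dispatch the second assertion. Assume $|\al|_p>1$ and, towards a contradiction, suppose $|\la|_p\le 1$. Then automatically $|\la|_p\le 1<|\al|_p^d$, so the growth lemma applies directly to $z=\al$; the resulting unbounded orbit contradicts $\la\in T(\al)$, and hence $|\la|_p>1$.

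For the first assertion, I would argue by contradiction: suppose $|\al|_p\le 1$ but $|\al^d+\la|_p\ge 1$. Then
\[
|f_\la(\al)|_p \;=\; \frac{p\,|\al^d+\la|_p}{|\al|_p} \;\ge\; p \;>\; 1,
\]
and $f_\la(\al)$ is preperiodic for $f_\la$ because $\al$ is. Applying the second assertion — already established — to $f_\la(\al)$ in place of $\al$ forces $|\la|_p>1$. Now $|\al^d|_p\le 1<|\la|_p$, so the ultrametric inequality upgrades the computation to $|\al^d+\la|_p=|\la|_p$, giving $|f_\la(\al)|_p=p|\la|_p/|\al|_p\ge p|\la|_p$, and in particular $|f_\la(\al)|_p^d\ge p^d|\la|_p^d>|\la|_p$. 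The growth lemma thus applies with $z=f_\la(\al)$, contradicting the preperiodicity of $f_\la(\al)$.

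The main obstacle is organizational rather than computational: one must not apply the growth lemma to $\al$ directly (since $|\al|_p\le 1$ and $|\la|_p$ is a priori unknown), but only after a single iterate of $f_\la$ has pushed the orbit into the attracting basin of $\infty$. Once one has the discipline to bootstrap through $f_\la(\al)$ and invoke the second assertion as a stepping stone, the ultrametric identity $|\al^d+\la|_p=|\la|_p$ becomes available and the rest is mechanical.
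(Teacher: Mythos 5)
Your proposal is correct and follows essentially the same route as the paper: an ultrametric escape-to-infinity argument, where once an iterate satisfies $|z|_p>1$ and $|z|_p^d>|\la|_p$ the identity $|f_\la(z)|_p=p|z|_p^{d-1}$ propagates and forces an unbounded (hence non-preperiodic) orbit. The only difference is organizational — you prove the second assertion first and bootstrap through $f_\la(\al)$ to get $|\la|_p>1$, whereas the paper first shows $|\la|_p\le 1$ under the hypothesis $|\al|_p\le 1$ — but the underlying estimates are identical.
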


\begin{proof}
Consider $\al\in\mathbb{C}_p\setminus\{0\}$ with $|\al|_p\le 1$ and let $\la\in T(\al)$. First, we will show that $|\la|_p\le 1$.
Assume to the contrary that $|\la|_p>1$. Then, we have
\begin{align*}
|f_{\la}(\al)|_p=\frac{p|\la|_p}{|\al|_p}\ge p|\la|_p>p.
\end{align*}
Inductively, we get that $|f^{n+1}_{\la}(\al)|_p=p|f^n_{\la}(\al)|_p^{d-1}$ for all $n\ge 1$. Since $f_{\la}(\al)\neq\infty$ and $d\ge 2$, this contradicts the fact that $\la\in T(\al)$.
Therefore we must have $|\la|_p\le 1$. Next we prove that $|\al^d+\la|_p<1$.

Assume, for the sake of contradiction, that $\la\in T(\al)$ and $|\al^d+\la|_p\ge1$. Then we have $|f_{\la}(\al)|_p\ge p$. Inductively this implies $|f^{n+1}_{\la}(\al)|_p=p|f^n_{\la}(\al)|_p^{d-1}$ for all $n\ge 1$. Since $\la\in T(\al)$ and $d\ge 2$, this yields $f_{\la}(\al)=\infty$, contradicting the fact that $\al\neq0$. Hence $|\al^d+\la|_p<1$.

We will now prove the second part of the statement. Assume that $|\al|_p>1$ and that $\la\in T(\al)$. We will see that $|\la|_p>1$.
Assume, to the contrary, that $|\la|_p\le 1$. Then, $|f_{\la}(\al)|_p=p|\al|_p^{d-1}$ and inductively $|f^{n+1}_{\la}(\al)|_p=p|f^n_{\la}(\al)|_p^{d-1}$ for all $n\in\bN$. This, combined with $|\al|_p>1$, contradicts our assumption that $\la\in T(\al)$. Therefore, $|\la|_p>1$. The proof follows.
\end{proof}

Now, if we denote by $\rho$ the natural reduction map $\mathbb{P}^1(\bC_p)\to\mathbb{P}^1(\overline{\mathbb{F}_p})$, we get the following theorem.

\begin{theorem}
If $\al,\beta\in\mathbb{C}_p\setminus\{0\}$ are such that $\rho(\al^d)\neq\rho(\beta^d)$, then $T(\al)\cap T(\beta)=\emptyset$.
\end{theorem}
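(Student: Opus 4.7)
The plan is to prove the contrapositive: if $T(\alpha)\cap T(\beta)\neq\emptyset$, then $\rho(\alpha^d)=\rho(\beta^d)$. So fix some $\lambda\in T(\alpha)\cap T(\beta)$ and split into cases based on whether $|\alpha|_p$ and $|\beta|_p$ are $\leq 1$ or $>1$. In each case the previous theorem (the 2-adic/$p$-adic trichotomy for a single point) does all the work.

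First, suppose $|\alpha|_p\leq 1$ and $|\beta|_p\leq 1$. Then the previous theorem gives $|\alpha^d+\lambda|_p<1$ and $|\beta^d+\lambda|_p<1$, so by the ultrametric inequality
\[
|\alpha^d-\beta^d|_p = |(\alpha^d+\lambda)-(\beta^d+\lambda)|_p < 1,
\]
and since both $\alpha^d,\beta^d$ lie in the closed unit disk this exactly says $\rho(\alpha^d)=\rho(\beta^d)$ in $\mathbb{P}^1(\overline{\mathbb{F}_p})$.

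Second, suppose $|\alpha|_p>1$ and $|\beta|_p>1$. Then $|\alpha^d|_p>1$ and $|\beta^d|_p>1$, so both reduce to the point at infinity: $\rho(\alpha^d)=\infty=\rho(\beta^d)$.

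The remaining case is the mixed one, say $|\alpha|_p\leq 1$ and $|\beta|_p>1$. Here I expect the main (and only) obstacle, though it is short. From $\lambda\in T(\beta)$ the previous theorem gives $|\lambda|_p>1$, while from $\lambda\in T(\alpha)$ it gives $|\alpha^d+\lambda|_p<1$. But $|\alpha^d|_p\leq 1<|\lambda|_p$ forces $|\alpha^d+\lambda|_p=|\lambda|_p>1$ by the ultrametric equality of non-equal norms, a direct contradiction. Hence this case never occurs, so $T(\alpha)\cap T(\beta)=\emptyset$ trivially in this subcase, which is consistent with (indeed stronger than) the claim.

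Combining the three cases gives: whenever $T(\alpha)\cap T(\beta)\neq\emptyset$ one must have $\rho(\alpha^d)=\rho(\beta^d)$. Contrapositively, if $\rho(\alpha^d)\neq\rho(\beta^d)$, then $T(\alpha)\cap T(\beta)=\emptyset$, which is the claim of the theorem.
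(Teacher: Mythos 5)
Your argument is correct and is essentially the deduction the paper intends: the theorem is stated as an immediate consequence of the preceding trichotomy-type theorem, and your three cases (both $|\cdot|_p\le 1$ giving $|\al^d-\be^d|_p<1$, both $>1$ reducing to $\infty$, and the mixed case ruled out by the contradiction $|\la|_p>1$ versus $|\al^d+\la|_p<1$) are exactly the case analysis left implicit there. Nothing is missing.
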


To highlight the difference between the results in this section and in Section \ref{1}, we find it worthwhile to point out the following. 

\begin{remark}
\em{Recall that in Section \ref{1}, the map $f(z)=\frac{z^4-8tz}{4(z^3+t)}\in\bC_2(t)(z)$ has the following properties: It is isotrivial, and it is conjugate to the map $g(z)=\frac{z^4-2z}{4z^3+1}$ with $2-$adic good reduction. For the map $f(z)=\frac{z^d+t}{pz}\in\bC_p(t)(z)$ in this section, these properties are true only when $(d,p)=(2,2)$, in which case for the choice $L(z)=2z-1$, we have
\begin{align*}
L^{-1}\circ g\circ L(z)=\frac{z^2}{2z-1}, \text{where }g(z)=\frac{z^2+1}{2z}
\end{align*}
and the map $\frac{z^2}{2z-1}$ has $2-$adic good reduction. If $d=2$ but $p\neq 2$, then the map $f(z)=\frac{z^2+t}{pz}\in\bC_p(t)(z)$ is still isotrivial. Indeed,  for  $M(z)=t^{1/2}z$, we have $M^{-1}\circ f\circ M(z)=\frac{z^2+1}{pz}\in\bC_p(z)$. However, $g(z)=\frac{z^2+1}{pz}\in\bC_p(z)$  in not $\mathrm{PGL}(2,\bC_p)-$conjugate to a map with good reduction, since it has two $p-$adically repelling fixed points $\pm\frac{1}{\sqrt{p-1}}$ with multiplier $\frac{2-p}{p}$, \cite[Theorem]{Benedetto potentially good}. Finally, if $d\ge 3$, then the map $f(z)=\frac{z^d+t}{pz}\in\bC_p(t)(z)$ is not isotrivial.}
\end{remark}

\section*{Acknowledgements}
I would like to thank Dragos Ghioca for suggesting the question and helping with exposition, and for insightful comments and enlightening conversations. I would also like to thank Rob Benedetto and Khoa Nguyen for insightful comments. Finally, I would like to thank the anonymous referee for various comments and suggestions that greatly improved this article.

\end{document}